\documentclass[reqno]{amsart}
\usepackage{amsfonts,amsmath,amsthm,amssymb,mathrsfs}
\usepackage{color}
\usepackage{amsmath,amssymb,amsfonts,bm}
\usepackage{graphicx}
\usepackage{newunicodechar}
\usepackage{xcolor}
\usepackage{geometry}
\numberwithin{equation}{section}
\usepackage[pdfstartview=FitH,colorlinks=true]{hyperref}
\hypersetup{urlcolor=red, linkcolor=black,citecolor=red}
\allowdisplaybreaks

\theoremstyle{plain}

\newtheorem{thm}{Theorem}[section]

\newtheorem{prop}[thm]{Proposition}
\newtheorem{cor}[thm]{Corollary}
\newtheorem{lemma}[thm]{Lemma}
\newtheorem{remark}[thm]{Remark}

\newcommand{\Div}{\operatorname{div}}

\begin{document}
\title[Backward Uniqueness]
{Backward Uniqueness for Coupled Ultraparabolic Operators and an Application to Jerk-Driven Control Models}

\author[X.-D.Cao \& C.-J. Xu \& Yan Xu]
{Xiao-Dong Cao, Chao-Jiang Xu, Yan Xu}

\address{Xiao-Dong Cao, Chao-Jiang Xu
\newline\indent
School of Mathematics and Key Laboratory of Mathematical MIIT,
\newline\indent
Nanjing University of Aeronautics and Astronautics, Nanjing 210016, China
\newline\indent
Yan Xu
\newline\indent
Department of Mathematical Sciences, Tsinghua University, Beijing 100084, China
}
\email{caoxiaodong@nuaa.edu.cn; xuchaojiang@nuaa.edu.cn; xu-y@mail.tsinghua.edu.cn}

\date{\today}

\subjclass[2020]{35K70, 35A02.}

\keywords{Carleman inequality, ultraparabolic operator, backward uniqueness.} 

\begin{abstract}
We prove backward uniqueness for a class of ultraparabolic operators with coupled linear drift. The main difficulty is that the Fourier transform in the degenerate variables turns the coupled drift into a transport operator in the dual frequency variables, so the classical Littlewood--Paley Carleman argument does not apply directly. We overcome this by introducing an invariant frequency variable and establishing a frequency-localized Carleman estimate adapted to the transport structure. The result gives a partial answer to the question of W. Wang and L. Zhang $\left[ \emph {Methods Appl. Anal.}, \ 20 \ (1) \ (2013) \ 79-88 \right]$ for constant coupled drift, with diffusion and lower-order coefficients depending on time and the diffusive variables. As an application, for a jerk-driven control model, we prove backward uniqueness for the equation describing the position, velocity, acceleration, or jerk error: under bounded lower-order coefficients, zero final error in $L^2$ implies zero error at all earlier times.
\end{abstract}

\maketitle

\section{Introduction}
\noindent
{\bf 1.1 Background.}
Ultraparabolic operators arise in several areas of analysis and applied mathematics, including kinetic theory \cite{Des, Kol-1, Villani-1, o-3, MX-1}, boundary layer theory \cite{Xin-Zhang}, stochastic processes \cite{Sonin-1}, financial mathematics \cite{Bar-1, BS, fi-1} and control theory; {\em see Section \ref{jerk section} below for a detailed discussion based on jerk models \cite{control-MM}}. From a mathematical point of view, ultraparabolic operators play an important role in the study of degenerate parabolic operators and often possess hypoelliptic structures. In contrast to uniformly parabolic operators, diffusion occurs only in part of the variables. The regularity theory for such operators has also led to useful analytic tools in kinetic theory. For related developments, we refer to \cite{Imbert, Mouhot01}. For H\"older regularity, a breakthrough De Giorgi-type result for equations with rough coefficients was obtained by W. Wang and L. Zhang \cite{Wang-Zhang01}. An alternative proof based on the diffusion variable averaging was later provided by F. Golse, C. Imbert, C. Mouhot and A. F. Vasseur \cite{GIMV}.

In a series of works, L. Escauriaza, G. Seregin and V. \v{S}ver\'ak \cite{ESS, ESS2} showed that the parabolic backward uniqueness property can be used to establish smoothness of Leray-Hopf solutions in $L^{\infty}_t L^3_x$. Their work has also inspired analogous critical regularity results for geometric flows, such as C. Wang's work \cite{C.W} on the heat flow of harmonic maps. For  backward uniqueness of the heat operator in different domains, there are many related works; we refer to \cite{ESS, WW} and references therein. For general parabolic operators, J. Wu and L. Zhang proved the backward uniqueness property both in a half space \cite{W-Z01} and in the whole space \cite{W-Z02} under certain assumptions on the leading coefficients of the operator. In fact, the backward uniqueness property of the general second-order parabolic operator is closely related to the Landis--Oleinik conjecture; see \cite{TAN-Z, W-Z03} for further details.

A standard approach to backward uniqueness is based on Carleman estimates, which originate from Carleman’s work \cite{Car} on unique continuation for elliptic equations. This topic has been extensively studied, and several general frameworks have been developed. We refer to \cite{Yamamoto} for further background.

\bigskip
\noindent
{\bf 1.2 Previous work and the main obstruction.}
For ultraparabolic operators, Carleman estimates are much more subtle than in the uniformly parabolic case. After the usual conjugation by an exponential weight, the main task is to obtain a weighted estimate in which all lower-order and error terms can be absorbed. In the uniformly parabolic case, this is helped by the fact that the second-order part differentiates in every spatial direction; first-order terms are then usually treated as bounded perturbations, or at least as terms controlled by the left-hand side of the Carleman inequality. For ultraparabolic operators, this simple picture breaks down. The diffusion is present only in some variables, and the other directions are seen only through the transport part and the corresponding commutators. At the same time, the drift is not a bounded lower-order term. In typical examples its coefficients grow linearly in space, such as $x\partial_y$. This interaction between the partial diffusion and unbounded drift is a major difficulty in proving backward uniqueness.

In the work of W. Wang and L. Zhang \cite{Wang-Zhang000}, backward uniqueness was studied for the following ultraparabolic operator:
\begin{equation}\label{special case}
\mathcal {L} = \partial_t + \sum_{\ell = 1}^m x_\ell \partial_{y_\ell} + \sum_{i, k = 1}^n \partial_{x_i} (a_{ik}(t, x) \partial_{x_k}).
\end{equation}

Their argument relies on a Littlewood--Paley decomposition in the degenerate variables. For the model \eqref{special case}, after taking the Fourier transform in $y$, the drift $\sum_{\ell=1}^m x_\ell \partial_{y_\ell}$ becomes
$i\sum_{\ell=1}^m x_\ell \eta_\ell$. In particular, no derivative in the Fourier variable $\eta$ appears. 
This absence of frequency transport is one of the key structural features used in the Carleman argument of \cite{Wang-Zhang000}.

This feature is lost for coupled linear drifts. For instance, consider
\begin{equation}\label{L1}
\mathcal{L}_1 = \partial_t + x \partial_y + y \partial_z + \partial_x^2.
\end{equation}
After taking the Fourier transform in $(y,z)$, the transport part becomes $\partial_t + i\eta x - \zeta \partial_\eta$.
The additional term $-\zeta\partial_\eta$ differentiates in the Fourier variable $\eta$, and hence transports the $y$-frequency. 
This frequency transport prevents a direct adaptation of the argument in \cite{Wang-Zhang000}.

This frequency-transport obstruction explains why the problem raised in {\em Remark 1.2} of \cite{Wang-Zhang000} is substantially more difficult than the model treated there.

\bigskip
\noindent
{\bf 1.3 Statement of the main result.}
In this paper, we study backward uniqueness for ultraparabolic operators with coupled linear drift, under structural assumptions on the diffusion matrix and lower-order coefficients analogous to those in \cite{Wang-Zhang000}. More precisely, we consider the backward uniqueness problem for the following ultraparabolic operators: 
\begin{equation}\label{ultraparabolic operator}
P = \partial_t + \left( B_1 v + B_2 w \right) \cdot \nabla_w + \Div_v \left( A(t, v) \nabla_v \right), \quad (t, v, w) \in (0, T) \times \mathbb{R}^m \times \mathbb{R}^n,  
\end{equation}
where $T > 0$ is fixed, $m, n \ge 1$, the matrices $B_1 \in \mathbb{R}^{n \times m}$ and $B_2 \in \mathbb{R}^{n \times n}$ satisfy the following condition: 
\begin{equation}\label{condition-B1B2}
\operatorname{rank} \left[ B_1, B_2B_1, \dots, B_2^{n - 1} B_1 \right] = n. 
\end{equation}
This condition means that the diffusive directions, together with their iterated commutators with the linear drift, generate all degenerate directions. The diffusion matrix $A(t, v) = (a_{ij}(t, v))_{1 \le i, j \le m}$ is symmetric and uniformly elliptic in the diffusive variables: 
\begin{equation}\label{condition-A}
a_{ij} (t, v) = a_{ji} (t, v) \in C^{0, 1}((0, T) \times \mathbb{R}^m); \qquad \lambda^{-1} |\xi|^2 \le \sum_{i, j = 1}^m a_{ij}(t, v) \xi_i \xi_j \le \lambda |\xi|^2, 
\end{equation} 
for all $(t, v) \in (0, T) \times \mathbb{R}^m$ and $\xi \in \mathbb{R}^m$, where $\lambda > 1$ is a fixed constant. Moreover, we assume that $u$ satisfies: 
\begin{equation}\label{condition-2}
\left\{
\begin{aligned}
&P u = c(t, v) u + d(t, v) \cdot \nabla_v u, \quad (t, v, w) \in (0, T) \times \mathbb{R}^m \times \mathbb{R}^n, \\
&u(0, v, w) = 0, \quad (v, w) \in \mathbb{R}^m \times \mathbb{R}^n, \\
&u, \nabla_v u, \left( \partial_t + (B_1 v + B_2 w) \cdot \nabla_w \right) u \in L^2((0, T) \times \mathbb{R}^{m} \times \mathbb{R}^n). 
\end{aligned}
\right.
\end{equation}
Here $d(t, v) = \left( d_1(t, v), \dots, d_m(t, v) \right)$ is an $\mathbb{R}^m$-valued function and the coefficients $c(t, v), d(t, v)$ are bounded functions. For simplicity, throughout the paper we assume the following uniform bounds:
\begin{equation}\label{condition-3}
|\partial_t a_{ij} (t, v)| + |\nabla_v a_{ij}(t, v)| + |c(t, v)| + |d(t, v)| \le \lambda, \quad 1 \le i, j \le m, \quad \lambda > 1, 
\end{equation}
where $\displaystyle |d(t, v)| = \left( \sum_{j = 1}^m |d_j (t, v)|^2 \right)^{1 / 2}$. 

The main result of this paper is stated as follows:
\begin{thm}\label{main}
Let $P$ be the operator in \eqref{ultraparabolic operator}. Suppose that $u$ satisfies \eqref{condition-2} in the distributional sense specified above, and that $B_1, B_2, A, c, d$ satisfy \eqref{condition-B1B2}, \eqref{condition-A} and \eqref{condition-3}. Then
$$
u \equiv 0, \quad a.e. \ (t, v, w) \in (0, T) \times \mathbb{R}^m \times \mathbb{R}^n.
$$
\end{thm}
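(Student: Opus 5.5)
Our plan is to follow the Fourier/Carleman strategy of Wang--Zhang, but to modify it so that it is compatible with the frequency transport generated by $B_2$.

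\textbf{Step 1: Fourier transform in $w$.} Set $\hat u(t,v,\eta)=\int e^{-iw\cdot\eta}u\,dw$. Since $\nabla_w$ becomes $i\eta$ and multiplication by $w$ becomes $i\nabla_\eta$, we have
\[
\widehat{(B_2 w)\cdot\nabla_w u}=-\Div_\eta\big((B_2^{T}\eta)\hat u\big)=-(B_2^T\eta)\cdot\nabla_\eta \hat u-(\operatorname{tr}B_2)\hat u .
\]
Hence $\hat u$ satisfies
\[
\partial_t\hat u-(B_2^T\eta)\cdot\nabla_\eta\hat u+i(B_1v\cdot\eta)\hat u+\Div_v(A\nabla_v\hat u)=(c+\operatorname{tr}B_2)\hat u+d\cdot\nabla_v\hat u .
\]

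\textbf{Step 2: straighten the transport.} We remove the term $-(B_2^T\eta)\cdot\nabla_\eta$ by introducing the invariant frequency variable $\theta=e^{tB_2^T}\eta$. The function $\tilde u(t,v,\theta)=\hat u(t,v,e^{-tB_2^T}\theta)$ then satisfies, for each fixed $\theta$, the parabolic equation in $(t,v)$
\[
\partial_t\tilde u+i\big(B_1v\cdot e^{-tB_2^T}\theta\big)\tilde u+\Div_v(A\nabla_v\tilde u)=\tilde c\,\tilde u+d\cdot\nabla_v\tilde u ,
\]
with zero data at $t=0$. The potential is purely imaginary and linear in $v$, with coefficient $\xi(t,\theta)=B_1^Te^{-tB_2}{}^{T}\theta$, written $B_1^T e^{-tB_2^T}\theta$ with transposes arranged appropriately.

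The Kalman rank condition \eqref{condition-B1B2} gives the lower bound
\[
\int_{t_1}^{t_2}|B_1^Te^{-sB_2^T}\theta|^2\,ds\ge c(t_2-t_1)^{2n-1}|\theta|^2 ,
\]
which is the standard controllability Gramian estimate. This replaces the constant size $|\eta|$ used in the Wang--Zhang argument.

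\textbf{Step 3: frequency-localized Carleman estimate.} We now work in $\theta$-space with a dyadic localization $|\theta|\sim 2^k$. Because the equation in Step 2 has no $\theta$-derivative, we can localize exactly in $\theta$, so each dyadic piece $\tilde u_k$ solves the same equation. For each piece we prove a Carleman estimate for the parabolic operator in $(t,v)$ with weight $e^{\phi}$, where $\phi=\phi(t)$ is, for example, $\phi=\alpha t^{-\beta}$ or the $(T-t)$-type weight used by Wang--Zhang. This gives a bound of the form
\[
\|e^{\phi}\tilde u_k\|+\|e^{\phi}\nabla_v\tilde u_k\|\lesssim \|e^{\phi}(\text{rhs})\| .
\]

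The lower-order terms $\tilde c\tilde u$ and $d\cdot\nabla_v\tilde u$ are absorbed once the parameter is large. The coefficients are Lipschitz and independent of $w$, so they commute with the Fourier localization; this is why the hypothesis that $c,d,A$ depend only on $(t,v)$ is essential.

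The imaginary potential $i\,v\cdot\xi(t,\theta)$ is unbounded in $v$. We handle it by splitting into real and imaginary parts. The commutator of $\partial_t$ with the imaginary potential produces $\partial_t\xi\cdot v$, and the $v$-gradient of the imaginary potential yields the positive term $|\xi(t,\theta)|^2|\tilde u|^2$ via the commutator $[\Div_v A\nabla_v, i v\cdot\xi]$. Integrating this in time and using the Gramian bound from Step 2 gives a gain comparable to $2^{2k}$ on each piece. This gain is what substitutes for the missing diffusion in $w$.

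\textbf{Step 4: conclude.} We let the weight parameter tend to infinity on each dyadic piece, in the same way as Wang--Zhang, to deduce that $\tilde u_k\equiv0$ near $t=0$. A continuation argument then propagates this up to time $T$. Summing over $k$ shows $\hat u=0$, and hence $u=0$.

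\textbf{Main obstacle.} We expect Step 3 to be the hardest part. The coefficient $\xi(t,\theta)$ may vanish at isolated times, so the Gramian bound only gives positivity after averaging in time, and the Carleman weight has to be chosen so that this averaged positivity can actually be used. We would first try to split $(0,T)$ into short intervals of length $\delta$ chosen depending on $k$, apply the Gramian bound on each interval, and choose the time weight to be slowly varying on this scale. The remaining task is to check that the resulting errors are absorbed for $k$ large, while the low frequencies are treated by the standard parabolic Carleman estimate.
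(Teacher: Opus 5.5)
Your Steps 1, 2 and 4 match the paper's architecture: the Fourier transform in $w$, the invariant frequency $\theta=e^{tB_2^T}\eta$, exact localization in $\theta$, a Carleman estimate in $(t,v)$ for each piece, sending the parameter to infinity, and continuation in time. Step 3, however, is where all the work lies, and it rests on a mechanism that does not exist. The commutator $[\Div_v(A\nabla_v),\, i v\cdot\xi]$ equals $2iA\xi\cdot\nabla_v+i\Div_v(A\xi)$. This is a first-order operator with no sign, so it produces no positive term $|\xi|^2|\tilde u|^2$. A factor $|\xi|^2$ appears only in the double commutator, and exploiting it would need a genuinely hypocoercive multiplier argument, which gives only a fractional power of $|\xi|$, not a gain of size $2^{2k}$. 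So the Gramian-driven gain, the $k$-dependent time splitting, and the Carleman estimate of Step 3 are all unproved.

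The low-frequency blocks are not covered by ``the standard parabolic Carleman estimate'' either. The potential $i v\cdot\xi$ is unbounded in $v$, so it cannot be moved to the right-hand side as a perturbation. The term $\partial_t\xi\cdot v$ you mention is equally unbounded; with the correct splitting it never appears. Finally, a weight like $e^{\alpha t^{-\beta}}$ cannot be applied to $\chi\tilde u_k$, because you only know that the $L^2$-trace vanishes at $t=0$. The paper instead uses the bounded weight $(t+b)^{-\alpha}$ with $b>0$.

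The missing idea is that no gain from the potential is needed. Because the localization is exact, you can fix the frequency cutoff first and only then let $\alpha\to\infty$. The paper's cutoff set is $\mathcal U_R=\{\rho:\sup_{0<t<t_2}(t+b)^3|B_1^Te^{-tB_2^T}\rho|^2\le R\}$. With $g=(t+b)^{-\alpha}h$, keep $iv\cdot\xi$ in the skew part $K_1=\partial_t+iv\cdot\xi$, set $K_2=\alpha/(t+b)+\Div_v(A\nabla_v)$, and drop $\|K_1g\|^2$.

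In the cross term $2\operatorname{Re}\int(t+b)\,(iv\cdot\xi)\, g\,\overline{K_2g}$, all $v$-growth cancels. Indeed $\operatorname{Re}\int i\alpha(\xi\cdot v)|g|^2=0$ and $\operatorname{Re}\int i(t+b)(\xi\cdot v)A\nabla_vg\cdot\overline{\nabla_vg}=0$. Only $-2\operatorname{Re}\int(t+b)\,iA\xi\,g\cdot\overline{\nabla_vg}$ survives, and it is at most $\varepsilon\int|\nabla_vg|^2+C_\varepsilon\int(t+b)^2|\xi|^2|g|^2$ (Lemma \ref{lemma02}).

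This is absorbed by the term $c_1\alpha\int(t+b)^{-1}|g|^2$ from Lemma \ref{lemma01} as soon as $(t+b)^3|\xi(t,\theta)|^2\le\varepsilon_0\alpha$. That condition holds on all of $\mathcal U_R$ once $\alpha\ge C_*R$. Sending $\alpha\to\infty$ gives $\mathcal S_Ru=0$ on $(0,t_1)$, and then $R\to\infty$ gives $u=0$ there. In this scheme the controllability Gramian plays no role in handling the potential; the rank condition is used only to show that $\mathcal U_R$ is bounded.
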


\begin{remark}
The solution $u$ in Theorem \ref{main} is not assumed to be smooth. The equation in \eqref{condition-2} holds in the sense of distributions, and the initial condition is interpreted as a zero $L^2(\mathbb R^{m} \times \mathbb{R}^n)$-trace at $t=0$. Thus the theorem applies to functions satisfying the $L^2$-regularity assumptions stated in the last line of \eqref{condition-2}. No additional classical differentiability is required.
\end{remark}

\begin{remark}
Here we present two illustrative examples of the model \eqref{ultraparabolic operator}. Firstly, if we take $m = 1$, let
$$
B_1= \left( 1, 0, \cdots, 0 \right)^T \in \mathbb{R}^{n \times 1}, \qquad
B_2=
\begin{pmatrix}
1 &0&0&\cdots&0&1\\
1&1&0&\cdots&0&0\\
0&1&1&\cdots&0&0\\
\vdots&\ddots&\ddots&\ddots&\vdots&\vdots\\
0&\cdots&0&1&1&0\\
0&\cdots&0&0&1&1
\end{pmatrix} \in \mathbb{R}^{n \times n},
\qquad 
A(t, v) = a_1 (t, v).
$$
With these choices, the operator \eqref{ultraparabolic operator} becomes 
$$
P = \partial_t + (v + w_1 + w_n) \partial_{w_1} + (w_1 + w_2) \partial_{w_2} + \dots + (w_{n - 1} + w_n) \partial_{w_n} + \partial_v (a_1 (t, v) \partial_v).
$$
For these choices, the rank condition \eqref{condition-B1B2} follows from a direct computation of $[B_1,B_2B_1,\ldots,B_2^{n-1}B_1]$. Secondly, the operator \eqref{L1} is obtained by taking
$$
B_1 = (1, 0)^T, \qquad
B_2 = 
\begin{pmatrix}
0&0\\
1&0
\end{pmatrix}, \qquad
A(t, v) = 1.
$$
\end{remark}

\bigskip
\noindent
{\bf 1.4 Idea of the proof and the adapted frequency cutoff.} As explained above, the usual Littlewood--Paley decomposition in the original degenerate frequency variables is not adapted to the coupled drift considered here. The key point is therefore to identify an invariant of the frequency transport and to construct the cutoff in this invariant variable. We outline the main idea of the proof in five steps.

{\em Step 1. (Fourier transform)} After taking the Fourier transform in the degenerate variable $w$: 
$$
\mathscr{F}_w u (t, v, \eta) = (2\pi)^{-\frac n2} \int_{\mathbb{R}^n} e^{-iw\cdot \eta} u(t, v, w) dw. 
$$
Writing $\widehat u=\mathscr F_wu$,  we obtain: 
\begin{align*}
\mathscr{F}_w (Pu) = \left[ \partial_t -(B_2^T\eta)\cdot\nabla_\eta
+i(B_1^T\eta)\cdot v +\operatorname{div}_v(A(t,v)\nabla_v) -\operatorname{tr}B_2 \right] \widehat u.
\end{align*}
Here $B_j^T$ denotes the transpose of $B_j$, and $\operatorname{tr}B_2$ denotes the trace of $B_2$. The constant zeroth-order term $-\operatorname{tr}B_2$ is harmless and will be absorbed into the lower-order part.

{\em Step 2. (Choosing an invariant frequency variable)} To overcome the difficulty caused by the term $-(B_2^T \eta) \cdot \nabla_\eta$, we look for a frequency variable that is invariant under the transport operator 
$$
\partial_t - (B_2^T \eta) \cdot \nabla_\eta.
$$
We set 
\begin{equation}\label{rho}
\rho = e^{tB_2^T} \eta. 
\end{equation}
A direct computation gives: 
$$
\left( \partial_t - (B_2^T \eta) \cdot \nabla_\eta \right)\rho = 0. 
$$
Thus $\rho$ is invariant under the frequency transport. Therefore the low-frequency cutoff is imposed in the $\rho$-variable rather than in the original frequency variable $\eta$.

{\em Step 3. (Removing the $\eta$-derivative)} We define the transform
\begin{equation}\label{T}
(\mathcal{T}u)(t, v, \rho) = e^{-\frac12 t \operatorname{tr} B_2} \mathscr{F}_w u(t, v, e^{-tB_2^T} \rho). 
\end{equation}
The factor $e^{-\frac12 t\operatorname{tr}B_2}$ makes $T$ unitary after the change of variables $\rho=e^{tB_2^T}\eta$. Under this transform, the operator $P$ becomes 
\begin{equation}\label{def-01}
\widetilde{P}_\rho : = \partial_t + i (B_1^T e^{-t B_2^T \rho}) \cdot v + \Div_v (A(t, v) \nabla_v) - \frac12 (\operatorname{tr} B_2).
\end{equation}
Thus the transport in the degenerate Fourier variables has been removed. The remaining difficulty is the time-dependent linear term 
$$
i(B_1^Te^{-tB_2^T}\rho)\cdot v,
$$
which has to be controlled in the Carleman estimate.

{\em Step 4. (Low-frequency region)} The cutoff must be imposed in the invariant variable $\rho$, rather than in the original Fourier variable $\eta$. Fix a local time interval $(0,t_2)\subset (0,T)$ and a constant $b$ such that $0<b\leq t_2$. For $R>0$, define
\begin{equation}\label{low}
\mathcal{U}_R := \left\{ \rho \in \mathbb{R}^n : \sup_{0 < t < t_2} (t + b)^3 |B_1^T e^{-t B_2^T} \rho|^2 \le R \right\}.
\end{equation}
The definition of $\mathcal U_R$ is chosen to fit the condition \eqref{condi-1} of Proposition \ref{local Carleman}. For every $\rho\in\mathcal U_R$, the left-hand side of \eqref{condi-1} is at most $R$. Thus, if $\alpha\geq\alpha_0$ and $R\leq\varepsilon_0\alpha$, Proposition \ref{local Carleman} applies to every frequency in $\mathcal U_R$. The quantity $B_1^T e^{-tB_2^T}\rho$ is used because it is the coefficient of the linear term
$$
i(B_1^T e^{-tB_2^T}\rho)\cdot v
$$
in $\widetilde P_\rho$ of \eqref{def-01}. In the proof of Proposition \ref{local Carleman}, Lemma \ref{lemma02} shows that this term can be absorbed precisely under the bound of \eqref{condi-1}. Therefore the low-frequency region is defined through $B_1^T e^{-tB_2^T}\rho$, rather than through $|\rho|$.

The rank condition \eqref{condition-B1B2} plays an essential role here. Besides expressing the hypoelliptic structure of the operator, it gives a quantitative lower bound for the invariant frequencies. More precisely, Lemma \ref{condition-B1B2-proof} shows that there exists $c_2>0$ such that
$$
\sup_{0<t<t_2}(t+b)^3\left|B_1^T e^{-tB_2^T}\rho\right|^2
\ge c_2|\rho|^2,\quad \rho\in\mathbb R^n.
$$
Hence, if $\rho\in \mathcal U_R$, then $|\rho|^2\le R/c_2$, so each $\mathcal U_R$ is bounded. On the other hand, every fixed $\rho\in\mathbb R^n$ belongs to $\mathcal U_R$ once $R$ is large enough. Therefore
$$
\bigcup_{R>0}U_R=\mathbb R^n.
$$

{\em Step 5. (Low-frequency projection)} To localize the transformed equation to the above low-frequency region, we define 
\begin{equation}\label{S}
\mathcal{S}_R := \mathcal{T}^{-1} \mathbf 1_{\mathcal{U}_R} \mathcal{T}, 
\end{equation}
where ${\bf 1}_{\mathcal U_R}$ denotes the characteristic function of $\mathcal U_R$. The meaning of this definition is simple: after applying the transform $\mathcal T$, the operator $\mathcal S_R$ keeps only the part with $\rho\in \mathcal U_R$ and sets the part with $\rho\notin \mathcal U_R$ to zero. Equivalently,
$$
\mathcal T(\mathcal S_Ru)={\bf 1}_{\mathcal U_R}\mathcal Tu.
$$
Since $\bigcup_{R>0}\mathcal U_R=\mathbb R^n$, proving $\mathcal S_Ru=0$ for every $R>0$ allows us to let $R\to\infty$ and recover the full solution. Thus the cutoff is made in the invariant frequency variable $\rho$, rather than in the original Fourier variable $\eta$. This is the reason why $\mathcal S_R$ is compatible with the frequency transport generated by the coupled drift.

Moreover, since $A$, $c$ and $d$ are independent of $w$, applying $\mathcal S_R$ does not create commutator terms involving these coefficients. Hence the equation for $\mathcal S_Ru$ retains the same lower-order structure.

\bigskip
\noindent
{\bf 1.5 Weak Carleman estimate and structure of the paper.}
The proof of Theorem \ref{main} is based on the following weak Carleman estimate. 
\begin{thm}\label{global Carleman}
Let constants $C\geq 1$, $C_*\geq 1$, 
$\alpha_0>1$, and $c_0\in(0,1)$ sufficiently small, depending only on $\lambda,m,n,B_1,B_2$.  Let $R>0$, $\alpha,t_2,b$ satisfy
$$
\alpha\geq \alpha_0+C_*R, \qquad 0<t_2\leq \min\{c_0\lambda^{-2},T\}, \qquad 0<b\leq t_2.
$$
Then for any $g(t, v, w) \in C^\infty_c ((0, t_2) \times \mathbb{R}^m \times \mathbb{R}^n)$, we have 
\begin{equation}\label{Carleman-1}
\begin{aligned}
&\quad \int_0^{t_2} \int_{\mathbb{R}^m} \int_{\mathbb{R}^n} (t + b)^{-2\alpha} |\nabla_v \mathcal{S}_R g|^2 dvdwdt + \alpha\int_0^{t_2} \int_{\mathbb{R}^m} \int_{\mathbb{R}^n} (t + b)^{-2\alpha - 1} |\mathcal{S}_R g|^2 dvdwdt \\
&\le C \int_0^{t_2} \int_{\mathbb{R}^m} \int_{\mathbb{R}^n} (t + b)^{-2\alpha + 1} |P\mathcal{S}_R g|^2 dvdwdt.
\end{aligned}
\end{equation}
\end{thm}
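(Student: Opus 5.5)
Since $\mathcal T$ is unitary from $L^2(dv\,dw)$ to $L^2(dv\,d\rho)$ for each fixed $t$, the plan is to transfer \eqref{Carleman-1} to the $\rho$-side. There it becomes a family of one-parameter (in $\rho$) Carleman estimates for $\widetilde P_\rho$ in the variables $(t,v)$, which we integrate over $\rho\in\mathcal U_R$.

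\emph{Step 1: transfer to the $\rho$-side.} Set $h(t,v,\rho)=\mathcal Tg(t,v,\rho)$. By definition, $\mathcal T(\mathcal S_Rg)=\mathbf 1_{\mathcal U_R}(\rho)h$. Since $\mathcal U_R$ does not depend on $t$, the cutoff commutes with $\widetilde P_\rho$. Hence
\[
\mathcal T(P\mathcal S_Rg)=\mathbf 1_{\mathcal U_R}\widetilde P_\rho h+\kappa\,\mathbf 1_{\mathcal U_R}h ,
\]
where $\kappa$ is the harmless constant coming from $\operatorname{tr}B_2$ (the two halves of the trace term, one in $\mathscr F_w P$ and one from the factor $e^{-\frac12 t\operatorname{tr}B_2}$). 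The operator $\nabla_v$ commutes with $\mathcal T$, and the weights $(t+b)^{\gamma}$ depend only on $t$. By Plancherel at each fixed $t$, both sides of \eqref{Carleman-1} therefore become $\int_{\mathcal U_R}(\cdots)\,d\rho$ of the corresponding quantities for $h(\cdot,\cdot,\rho)$. It thus suffices to prove, for a.e.\ fixed $\rho\in\mathcal U_R$ and with $f=h(\cdot,\cdot,\rho)$ compactly supported in $t\in(0,t_2)$,
\[
\iint (t+b)^{-2\alpha}|\nabla_vf|^2+\alpha\iint (t+b)^{-2\alpha-1}|f|^2\le C\iint (t+b)^{-2\alpha+1}\big|(\widetilde P_\rho+\kappa) f\big|^2 .
\]
The function $f$ is smooth in $t$ and Schwartz in $v$ (being a Fourier transform of a test function), which is enough regularity for what follows.

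\emph{Step 2: apply the local estimate.} For $\rho\in\mathcal U_R$, the quantity $\sup_t(t+b)^3|B_1^Te^{-tB_2^T}\rho|^2$ is at most $R$, which is at most $\varepsilon_0\alpha$ once $C_*\ge\varepsilon_0^{-1}$. Since also $\alpha\ge\alpha_0$, Proposition \ref{local Carleman} applies with its hypothesis \eqref{condi-1} satisfied. That proposition yields exactly the single-frequency estimate above for $\widetilde P_\rho$. Its mechanism, as I understand it, is as follows:
\begin{itemize}
\item Conjugate by $(t+b)^{-\alpha}$ and split into symmetric and antisymmetric parts.
\item Use the commutator of $\partial_t$ with the weight to get the positive term $\alpha(t+b)^{-2\alpha-1}|f|^2$.
\item Use ellipticity \eqref{condition-A} for the gradient term.
\item Control $\partial_ta_{ij}$ by the bound \eqref{condition-3}, taking $t_2\le c_0\lambda^{-2}$ small.
\item Absorb the purely imaginary potential $i(B_1^Te^{-tB_2^T}\rho)\cdot v$ via Lemma \ref{lemma02}. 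Its commutator with $\operatorname{div}_v(A\nabla_v)$ produces terms $|B_1^Te^{-tB_2^T}\rho|\,|f|\,|\nabla_v f|$, and these are dominated using $(t+b)^3|B_1^Te^{-tB_2^T}\rho|^2\le\varepsilon_0\alpha$.
\end{itemize}

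\emph{Step 3: the constant term.} The constant term $\kappa f$ is absorbed: $(t+b)\kappa^2|f|^2\le\kappa^2 t_2\,(t+b)^{-1}|f|^2$ relative to the weights, which is small compared to $\alpha(t+b)^{-2\alpha-1}|f|^2$ once $\alpha_0\gtrsim|\operatorname{tr}B_2|^2$. Integrating over $\rho\in\mathcal U_R$ and undoing Plancherel then gives \eqref{Carleman-1}, with $C$ independent of $\rho$ and $R$.

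\emph{Main obstacle.} The main obstacle is uniformity in $\rho$ in Step 2. The constant in Proposition \ref{local Carleman} must not depend on $\rho$, only on the bound \eqref{condi-1}. This is exactly why the frequency region is defined through $(t+b)^3|B_1^Te^{-tB_2^T}\rho|^2$ and why we require $\alpha\ge\alpha_0+C_*R$. If a direct application of the proposition left a residual time-derivative term from $\partial_t e^{-tB_2^T}$, I would bound it by $|B_2|\,|B_1^Te^{-tB_2^T}\rho|$-type quantities. These are again controlled on $\mathcal U_R$, using the smallness of $t_2$, at the cost of enlarging $C_*$.
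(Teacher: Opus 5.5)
Your proposal is correct and follows the paper's proof. Both pass to the $\rho$-side via $\mathcal T(\mathcal S_Rg)=\mathbf 1_{\mathcal U_R}\mathcal Tg$ and Plancherel. Both use $C_*\ge\varepsilon_0^{-1}$, so that $R\le\varepsilon_0\alpha$, to apply Proposition \ref{local Carleman} with a $\rho$-independent constant for every $\rho\in\mathcal U_R$, and then integrate over $\mathcal U_R$.

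The only slip is bookkeeping. The paper's $\widetilde P_\rho$ already contains $-\frac12\operatorname{tr}B_2$, so $\mathcal T P\mathcal T^{-1}=\widetilde P_\rho$ exactly and your $\kappa$ is zero; your Step 3 absorption is already Step 4 inside that proposition. No residual $\partial_t e^{-tB_2^T}$ term ever arises either.
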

\begin{remark}
We call \eqref{Carleman-1} a weak Carleman estimate because it controls only the low-frequency component $\mathcal S_Rg$. The parameter $R$ is fixed first, and the Carleman parameter $\alpha$ is then sent to infinity. After obtaining $\mathcal S_Ru=0$ for every $R > 0$, we use
$
\bigcup_{R>0}\mathcal U_R=\mathbb R^n
$.
As $\mathcal S_R=\mathcal T^{-1}\mathbf 1_{\mathcal U_R}\mathcal{T}$, this implies $\mathcal{T}u=0$ almost everywhere, and therefore $u=0$.
\end{remark}

Theorem \ref{global Carleman} is derived from the following Carleman estimate in the diffusive variables, with the invariant frequency $\rho$ fixed.
\begin{prop}\label{local Carleman}
Let constants $C\ge 1, \alpha_0>1$, and $c_0, \varepsilon_0 \in(0,1)$ sufficiently small, depending only on $\lambda,m,n,B_1,B_2$. Let $\alpha,t_2,b$ satisfy:
$$
\alpha\geq \alpha_0, \qquad 0<t_2\leq \min\{c_0\lambda^{-2},T\}, \qquad 0<b\leq t_2.
$$
Fix $\rho \in \mathbb{R}^n$. If 
\begin{equation}\label{condi-1}
\sup_{0 < t < t_2} (t + b)^3 |B_1^T e^{-t B_2^T} \rho|^2 \le \varepsilon_0 \alpha.
\end{equation}
Then for any $h(t, v) \in C_c^\infty ((0, t_2) \times \mathbb{R}^m)$, we have 
\begin{equation}\label{Carleman-2}
\int_0^{t_2} \int_{\mathbb{R}^m} (t + b)^{-2\alpha} |\nabla_v h|^2 dvdt + \alpha \int_0^{t_2} \int_{\mathbb{R}^m} (t + b)^{-2\alpha - 1} |h|^2 dvdt \le C \int_0^{t_2} \int_{\mathbb{R}^m} (t + b)^{-2\alpha + 1} |\widetilde{P}_\rho h|^2 dvdt.
\end{equation}
\end{prop}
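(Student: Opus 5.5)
We prove Proposition \ref{local Carleman} by the classical parabolic Carleman argument with weight $(t+b)^{-\alpha}$, treating the time-dependent potential $i(B_1^Te^{-tB_2^T}\rho)\cdot v$ as a perturbation absorbed via \eqref{condi-1}. Write $\theta(t)=B_1^Te^{-tB_2^T}\rho\in\mathbb R^m$ and $f=(t+b)^{-\alpha}h$, so that $h=(t+b)^{\alpha}f$ and
$$
(t+b)^{-\alpha}\widetilde P_\rho h=\partial_t f+\alpha(t+b)^{-1}f+\Div_v(A\nabla_v f)+i\theta(t)\cdot v\, f-\tfrac12(\operatorname{tr}B_2)f .
$$
The right-hand side of \eqref{Carleman-2} becomes $C\int (t+b)|\,\cdot\,|^2$. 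The constant term $-\frac12\operatorname{tr}B_2 f$ is harmless: it costs $\int (t+b)|f|^2\le t_2^2\int (t+b)^{-1}|f|^2$, which is absorbed by $\alpha\int(t+b)^{-1}|f|^2$ once $\alpha\ge\alpha_0$.

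\emph{Step 1: the model estimate without $\theta$.} For $L_0f=\partial_t f+\alpha(t+b)^{-1}f+\Div_v(A\nabla_vf)$ we expand $\int (t+b)|L_0f|^2$ as the sum of the squares of $S f=\alpha(t+b)^{-1}f+\Div_v(A\nabla_v f)$ and $\partial_tf$, plus twice the real cross term $\mathrm{Re}\int (t+b)\,\partial_t f\,\overline{Sf}$. Integrating by parts in $t$ (using $f$ compactly supported), the $\alpha$-part gives $\mathrm{Re}\int\alpha\,\partial_t f\bar f=0$. The diffusion part gives $\frac12\int|A^{1/2}\nabla_vf|^2$ up to an error $\int (t+b)|\partial_tA||\nabla_v f|^2\lesssim\lambda t_2\int|\nabla_vf|^2$, which is small since $t_2\le c_0\lambda^{-2}$. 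We also use the identity
$$
\int\alpha|\nabla_v f|^2\text{-type terms}=\int \alpha(t+b)^{-1}\bigl(\mathrm{Re}(-\Div(A\nabla f)\bar f)\bigr),
$$
combined with $\int(t+b)|Sf|^2\ge0$, to extract $\alpha\int(t+b)^{-1}|f|^2$. This extraction is the delicate part of the parabolic argument. Concretely, I would follow the standard route of Escauriaza--Seregin--Šverák and Wang--Zhang \cite{Wang-Zhang000}. One writes $(t+b)^{1/2}L_0$ as a symmetric plus antisymmetric part and computes the commutator, which produces the positive term $\alpha(t+b)^{-1}|f|^2$ from $\partial_t$ hitting $\alpha(t+b)^{-1}$. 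The error from $\partial_t a_{ij}$ and $\nabla_v a_{ij}$ is $O(\lambda)$ and is absorbed using $t_2\le c_0\lambda^{-2}$. The outcome is
$$
\int|\nabla_v f|^2+\alpha\int(t+b)^{-1}|f|^2\le C\int (t+b)|L_0f|^2 .
$$
Translating back to $h$, this gives \eqref{Carleman-2} with $\theta=0$; the identity $\nabla_v f=(t+b)^{-\alpha}\nabla_v h$ holds because the weight does not depend on $v$.

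\emph{Step 2: absorbing the potential.} This is the main obstacle, because $i\theta\cdot v f$ is unbounded in $v$, so we cannot simply bound $\int(t+b)|\theta\cdot v|^2|f|^2$. Instead we include it in the antisymmetric part of the operator, since $i\theta\cdot v$ is skew-adjoint. We then compute its commutator contributions with the symmetric part $\Div_v(A\nabla_v)+\alpha(t+b)^{-1}$ and with $\partial_t$. These commutators are:
\begin{itemize}
\item $[\Div_v(A\nabla_v),\,i\theta\cdot v]$, which equals $2iA\theta\cdot\nabla_v$ plus the zeroth-order term $i\,\Div_v(A\theta)$;
\item $[\partial_t,\,i\theta\cdot v]=i\theta'\cdot v$;
\item the commutator with $\alpha(t+b)^{-1}$, which vanishes.
\end{itemize}
The dangerous cross term is therefore of the form $\mathrm{Im}\int(t+b)\,\theta\cdot A\nabla_v f\,\bar f$. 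Alternatively, I would remove $\theta\cdot v$ by the gauge $f=e^{i\phi(t)\cdot v}g$ with $\phi'=-\theta$ and $\phi(0)=0$. This converts the potential into a magnetic-type shift $\nabla_v\mapsto\nabla_v+i\phi$ in the diffusion, at the cost of terms $|\phi|\,|\nabla g|$ and $|\phi|^2|g|$. Either way, the error is bounded by
$$
C\int (t+b)|\theta|\,|\nabla_v f|\,|f|\le \delta\int|\nabla_v f|^2+C_\delta\int (t+b)^{2}|\theta|^2|f|^2 ,
$$
and $(t+b)^2|\theta|^2\le \varepsilon_0\alpha(t+b)^{-1}$ by \eqref{condi-1}. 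With $\varepsilon_0$ small, this is absorbed by $\alpha\int(t+b)^{-1}|f|^2$; this is the content of Lemma \ref{lemma02}. The term involving $\theta'$ has $|\theta'|\le\|B_2\|\,|\theta\text{-type}|$ and is handled in the same way, which explains the cubic power $(t+b)^3$ in \eqref{condi-1}. Finally, $\nabla_v a_{ij}$ appears in $\Div(A\theta)$, giving an error $\lambda|\theta|$ that is controlled likewise. Choosing first $\delta$ and $\varepsilon_0$, then $c_0$, then $\alpha_0$ closes the estimate \eqref{Carleman-2}.
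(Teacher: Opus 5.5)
Your proposal is correct. It has the same architecture as the paper: conjugation by $(t+b)^{-\alpha}$, the potential $i\theta\cdot v$ placed in the skew part together with $\partial_t$, and the cross term $-2\int (t+b)\,\mathrm{Im}(A\theta\cdot\nabla_v f\,\bar f)$ absorbed by Young's inequality and \eqref{condi-1}, which is exactly Lemma \ref{lemma02}. As in the paper, the trace term is absorbed by taking $\alpha_0$ large and the $\partial_t A$ error by taking $c_0$ small.

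The genuine difference is how you get the $\theta=0$ coercivity. The paper uses the split $K_1=\partial_t+i\theta\cdot v$, $K_2=\alpha(t+b)^{-1}+\Div_v(A\nabla_v)$ and discards $\int(t+b)|K_1f|^2$. With the weight $(t+b)$ the $\alpha$-part of the cross term vanishes, so the paper must extract the $\alpha$-gain from $\int(t+b)|K_2f|^2$ plus the single extra $E(t)$. That is Lemma \ref{lemma01}: a fixed-time quadratic-form argument with $Q=\|X+Y\|^2+E\ge E$ and $E\le \|X\|\|Y\|/\alpha$, giving $Q\ge \frac34\alpha(t+b)^{-1}\|f\|^2$.

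Your ``concretely'' version instead symmetrizes $(t+b)^{1/2}L$ in $L^2(dt\,dv)$. You need the skew part to be $(t+b)^{1/2}(\partial_t+i\theta\cdot v)+\frac14(t+b)^{-1/2}$, since $(t+b)^{1/2}\partial_t$ alone is not skew. The symmetric part is then $(\alpha-\frac14)(t+b)^{-1/2}+(t+b)^{1/2}\Div_v(A\nabla_v)$, and the commutator form is exactly
\[
\tfrac12\bigl(\alpha-\tfrac14\bigr)\int (t+b)^{-1}|f|^2+\tfrac12\int A\nabla_v f\cdot\overline{\nabla_v f}+\int (t+b)(\partial_t A)\nabla_v f\cdot\overline{\nabla_v f}-2\int (t+b)\,\mathrm{Im}\bigl(A\theta\cdot\nabla_v f\,\bar f\bigr).
\]
Both squares can then be dropped, and no analogue of Lemma \ref{lemma01} is needed. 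Your route is shorter and reads the coercive terms directly off the commutator. The paper's route avoids the $\frac14(t+b)^{-1/2}$ correction and places the positivity in a fixed-time inequality for $Q(t)$, of frequency-function type.

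A few slips in your write-up:
\begin{enumerate}
\item Your first sketch of Step 1 does not close. It uses only $\int(t+b)|Sf|^2\ge 0$, and in that splitting the $\alpha$-cross term is zero, as you note. Discarding $\int(t+b)|Sf|^2$ therefore leaves no $\alpha\int(t+b)^{-1}|f|^2$; only the symmetrized version works.
\item The commutator $[\partial_t,i\theta\cdot v]$ never enters. Both operators lie in the skew part, and only skew--symmetric commutators appear. The cubic power in \eqref{condi-1} is explained entirely by $(t+b)^2|\theta|^2\le\varepsilon_0\alpha(t+b)^{-1}$ after Young.
\item The term $i\,\Div_v(A\theta)|f|^2$ is purely imaginary and contributes nothing to the real commutator form, so $\nabla_v a_{ij}$ does not enter (your crude bound for it is harmless).
\item In the gauge variant, the $|\phi|$-terms belong to the magnetic operator and are not errors. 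The only error comes from $\phi'=-\theta$ and equals the same term, because $|(\nabla_v+i\phi)g|=|\nabla_v f|$.
\end{enumerate}
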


\begin{remark}
The assumption that $A(t, v), c(t, v), d(t, v)$ are independent of the degenerate variable w is essential in the present argument. Indeed, the low-frequency cutoff $\mathcal S_R$ is defined through the invariant Fourier variable associated with the coupled drift. Therefore $\mathcal S_R$ commutes with $A, c, d$ only because these coefficients do not depend on $w$. If the coefficients were allowed to depend on the degenerate variables, additional commutator terms would appear, and the present Carleman estimate would no longer close. Thus Theorem \ref{main} should be viewed as a backward uniqueness result for ultraparabolic operators with constant coupled linear drift and coefficients depending on the time and diffusive variables. The fully variable-coefficient problem remains open; see \cite{Zhang000}. Nevertheless, the invariant-frequency cutoff introduced here may provide a useful tool for studying such more general operators.
\end{remark}

The rest of the paper is organized as follows. In Section \ref{jerk section}, we discuss a control-theoretic application based on jerk-driven control models and derive a qualitative backward uniqueness consequence from Theorem \ref{main}. Sections \ref{proof of Carleman} and \ref{proof of BU} are devoted to the proof of the weak Carleman estimate and the proof of the main theorem respectively. Section \ref{appendix} contains the proofs of the technical lemmas used in Section \ref{proof of Carleman}.

\section{A control-theoretic application: jerk-driven control models}\label{jerk section}
In this section, we give an application from control theory that leads to the operator studied in this paper. In a jerk-driven control model, the motion is described not only by position and velocity, but also by acceleration and jerk, where jerk means the time derivative of acceleration. This produces a chain structure: jerk affects acceleration, acceleration affects velocity, and velocity affects position. When random perturbations or modeling errors enter through the jerk variable, the corresponding linear error equation has diffusion only in the jerk direction, while the other variables are connected through this chain. We will write such an equation for one chosen error component, for example the position, velocity, acceleration, or jerk error. After reversing time, this equation becomes an operator of the form covered by Theorem \ref{main}. Therefore, if the whole error function is zero at the final time in the $L^2$ sense, then it must be zero at all earlier times.

\subsection{Background in control theory}
The mathematical modeling of target tracking processes is one of the central problems in control theory.
In aerospace target tracking and vehicle motion modeling, the acceleration of highly maneuvering targets may change significantly over a short period of time.
For such problems, lower-order motion models involving only the position, velocity, and acceleration of the tracked object are often not accurate enough.
For this reason, the so-called jerk variable, namely the time derivative of acceleration, was introduced in \cite{control-MM} as a new state variable to describe the rate of change of acceleration. The corresponding model is called the jerk model.

At the same time, Ghazaei--Robertsson--Johansson \cite{Ghazaei} explicitly reformulated robotic trajectory generation as a controller design problem and derived a minimum-jerk optimal controller by means of the HJB equation. The jerk-controlled robot path tracking problem studied by Palleschi et al. \cite{Palleschi} is even more direct: they take jerk itself as the control input and study the time-optimal path tracking problem under velocity, acceleration, and jerk constraints. Therefore, the jerk model is not merely an estimation model in target tracking; it can also be viewed as a typical source of higher-order state chains and higher-order control constraints in control theory.

We now look more closely at the jerk model. For clarity of the subsequent discussion, we consider the one-dimensional case. Let
$$
\mathcal Q=\text{position},\qquad
\mathcal V=\text{velocity},\qquad
\mathcal A=\text{acceleration},\qquad
J=\text{jerk}.
$$
Since jerk is the derivative of acceleration, we have
\begin{equation}\label{chain}
\frac{d\mathcal A}{ds}=J,\qquad
\frac{d\mathcal V}{ds}=\mathcal A,\qquad
\frac{d\mathcal Q}{ds}=\mathcal V.
\end{equation}

Let
$$
Z_s=(J_s,A_s,V_s,Q_s)
$$
be the actual full state, and let
$$
\widetilde Z_s=(\widetilde J_s,\widetilde A_s,\widetilde V_s,\widetilde Q_s)
$$
be the desired state. The full state error vector is
$$
\mathcal E_s
=
Z_s-\widetilde Z_s
=
\big(\varepsilon_J(s),\varepsilon_A(s),\varepsilon_V(s),\varepsilon_Q(s)\big).
$$
Since Theorem \ref{main} is formulated for scalar equations, we do not treat the whole vector $\mathcal E_s$ directly. Instead, we choose one component of the error, for example the position error, velocity error, acceleration error, or jerk error, and regard it as a function of time and of the variables $J,\mathcal A, \mathcal V, \mathcal Q$. We denote this component error function by
$$
e=e(s,J,\mathcal A,\mathcal V,\mathcal Q).
$$
For example, if we choose the position error, then $e=e_{\mathcal Q}$ and along the actual trajectory $Z_s$ one has
$$
e_Q(s,Z_s)=\varepsilon_{\mathcal Q}(s).
$$
Thus $e_{\mathcal Q}$ is not only a number measured along one trajectory; it is a function defined for all values of $(J,\mathcal A,\mathcal V,\mathcal Q)$. The above identity only explains how this function is read along the actual trajectory. Now applying the chain rule to this error component gives 
$$
\partial_s e+J\partial_{\mathcal A}e+\mathcal A \partial_{\mathcal V}e+\mathcal V\partial_{\mathcal Q}e.
$$
This shows that the drift part is not artificially constructed; rather, it is directly generated by the jerk model.

In the classical jerk model of Mehrotra and Mahapatra \cite{control-MM}, random perturbations or unmodeled errors first enter the highest-order variable $J$. The jerk satisfies an equation of the form
$$
\frac{dJ}{ds}=-\alpha J+\omega(s),
$$
where $\omega(s)$ is a white noise input. Thus the noise occurs only in the $J$-variable, while
acceleration, velocity and position are affected through relations \eqref{chain}. In this paper, we impose a boundedness assumption on the first-order coefficient in the $J$-variable. Motivated by this structure, we use the following equation for the chosen component error function:
\begin{equation}\label{forward-ultra}
\partial_s e+J\partial_{\mathcal A}e+\mathcal A\partial_{\mathcal V}e+\mathcal V\partial_{\mathcal Q}e
-\partial_J(a(s,J)\partial_J e)+c(s,J)e+d(s,J)\partial_J e=0.
\end{equation}

{\em A natural question is: if, at the terminal time, this error function is zero for all values of the variables $J,\mathcal A,\mathcal V,\mathcal Q$ in the $L^2$ sense, can one conclude that it was already zero at all earlier times?}

\subsection{Time reversal} To align the ultraparabolic equation \eqref{forward-ultra} with our main result, Theorem \ref{main}, we first perform a time reversal. Set 
$$
t=T-s,\qquad
u(t,J,\mathcal A,\mathcal V,\mathcal Q)=e(T-t,J,\mathcal A,\mathcal V,\mathcal Q),
$$
and 
$$
\bar a(t,J)=a(T-t,J), \qquad \bar c(t, J) = c(T - t, J), \qquad \bar d(t, J) = d(T - t, J).
$$
Then \eqref{forward-ultra} is equivalent to
$$
-\partial_t u
+J\partial_{\mathcal A}u
+\mathcal A\partial_{\mathcal V}u
+\mathcal V\partial_{\mathcal Q} u
-\partial_J\big(\bar a(t,J)\partial_Ju\big)
+\bar c(t,J)u
+\bar d(t,J)\partial_Ju
=0.
$$
Multiplying the equation by $-1$, we obtain
\begin{equation}\label{backward-ultra}
\partial_t u
-J\partial_{\mathcal A}u
-\mathcal A\partial_{\mathcal V}u
-\mathcal V\partial_{\mathcal Q} u
+\partial_J\big(\bar a(t,J)\partial_Ju\big)
=
\bar c(t,J)u+\bar d(t,J)\partial_Ju.
\end{equation}
Thus the equation \eqref{forward-ultra} is written in the reversed-time form: the terminal condition $e(T,\cdot)=0$ becomes the initial condition
$u(0,\cdot)=0$.

\subsection{Application of Theorem \ref{main}.} After the above time reversal, we obtain \eqref{backward-ultra}. Now we can apply our main result, i.e., Theorem \ref{main}. Indeed, once we take $m = 1$, $n = 3$, 
$$
v=J,\qquad w=(\mathcal A,\mathcal V,\mathcal Q),
$$
and choose
$$
B_1=
\begin{pmatrix}
-1\\
0\\
0
\end{pmatrix},
\qquad
B_2=
\begin{pmatrix}
0&0&0\\
-1&0&0\\
0&-1&0
\end{pmatrix},
\qquad
A(t,v)=\bar a(t,J).
$$
Direct computation gives $\operatorname{rank} \left[ B_1, B_2 B_1, B_2^2 B_1 \right] = 3$. Then we can immediately deduce the following result by using Theorem \ref{main}:
\begin{cor}\label{cor1}
Let $u$ satisfy \eqref{backward-ultra}. Assume that $\bar a$ is uniformly elliptic and Lipschitz continuous in $(t,J)$, and $\bar c, \bar d$ are bounded. If
$$
u(0,\cdot)=0,
$$
and
$$
u, \quad \partial_J u, \quad \left( \partial_t - J \partial_{\mathcal A} - \mathcal A \partial_{\mathcal V} - \mathcal V \partial_{\mathcal Q} \right) u \in L^2((0, T) \times \mathbb{R}^4).
$$
Then
$$
u\equiv 0.
$$
\end{cor}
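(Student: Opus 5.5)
Corollary \ref{cor1} follows by checking that \eqref{backward-ultra} is a special case of \eqref{condition-2} and then applying Theorem \ref{main}. We set $m=1$, $n=3$, $v=J$, $w=(\mathcal A,\mathcal V,\mathcal Q)$, with $B_1,B_2$ as displayed above.

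First, we identify the drift. A direct multiplication gives
\[
B_1v+B_2w=(-J,\,-\mathcal A,\,-\mathcal V)^T,
\]
so $(B_1v+B_2w)\cdot\nabla_w=-J\partial_{\mathcal A}-\mathcal A\partial_{\mathcal V}-\mathcal V\partial_{\mathcal Q}$. Hence the left side of \eqref{backward-ultra} is exactly $Pu$ with $A(t,v)=\bar a(t,J)$, and the right side is $c(t,v)u+d(t,v)\cdot\nabla_v u$ with $c=\bar c$ and $d=\bar d$.

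Next, we verify the rank condition \eqref{condition-B1B2}. We compute
\[
B_2B_1=(0,1,0)^T,\qquad B_2^2B_1=(0,0,-1)^T.
\]
Together with $B_1=(-1,0,0)^T$, the matrix $[B_1,B_2B_1,B_2^2B_1]$ is diagonal with nonzero entries, so its rank is $3$.

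We then check the coefficient hypotheses. Uniform ellipticity of $\bar a$ gives $\lambda^{-1}\le\bar a\le\lambda$ for some $\lambda>1$. Lipschitz continuity of $\bar a$ in $(t,J)$ gives $\bar a\in C^{0,1}$ with $|\partial_t\bar a|+|\partial_J\bar a|$ bounded almost everywhere. Together with the boundedness of $\bar c$ and $\bar d$, we may enlarge $\lambda$ so that \eqref{condition-A} and \eqref{condition-3} hold. The symmetry requirement is trivial since $m=1$. The regularity assumptions of the corollary are precisely the last line of \eqref{condition-2}. The hypothesis $u(0,\cdot)=0$ is understood, as in the remark after Theorem \ref{main}, as a zero $L^2$ trace, which makes sense because $u$ and its transport derivative both lie in $L^2$.

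All hypotheses of Theorem \ref{main} are satisfied, so $u\equiv0$ almost everywhere on $(0,T)\times\mathbb R^4$. Reverting the time change $t=T-s$ gives $e\equiv0$. There is no substantive obstacle here. The only points needing care are that Lipschitz continuity yields the derivative bounds in \eqref{condition-3} almost everywhere, which is how they are used in the paper, and the sign bookkeeping in $B_1$ and $B_2$.
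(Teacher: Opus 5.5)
Your proposal is correct and follows the paper's route exactly. You identify $m=1$, $n=3$, $v=J$, $w=(\mathcal A,\mathcal V,\mathcal Q)$ with the given $B_1,B_2$, check that $[B_1,B_2B_1,B_2^2B_1]=\operatorname{diag}(-1,1,-1)$ has rank $3$, verify the drift identity, the coefficient bounds (after enlarging $\lambda$) and the $L^2$ hypotheses, and invoke Theorem \ref{main}; these checks fill in what the paper leaves as a direct computation.
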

\begin{remark}
Corollary \ref{cor1} gives a qualitative backward uniqueness statement for \eqref{forward-ultra} satisfied by the chosen component error function. More precisely, suppose that one component of the error, such as the position error or the velocity error, is represented by a function $e=e(s,J,\mathcal A,\mathcal V,\mathcal Q)$. If this whole function is zero at the terminal time, then the same function must be identically zero for all earlier times. 
\end{remark}

 \section{Proof of the Carleman estimates}\label{proof of Carleman}
This section is devoted to the proof of the weak Carleman estimate. As described in the Introduction, we first establish Proposition \ref{local Carleman}. In order to make its proof clearer and more transparent, we separate two technical estimates from the main argument and state them as Lemmas \ref{lemma01} and \ref{lemma02}. Their proofs are postponed to Section \ref{appendix} for readability. We then derive the weak Carleman estimate, Theorem \ref{global Carleman}, by applying Proposition \ref{local Carleman} for each fixed invariant frequency and integrating over the low-frequency region $\mathcal U_R$. 

The first technical estimate in the proof of Proposition \ref{local Carleman} is stated as follows:
\begin{lemma}\label{lemma01}
Let the constants $c_0, \alpha_0$ be as in Proposition \ref{local Carleman}, $\alpha > \alpha_0$ and $t_2, b$ satisfy 
$$
0 < t_2 < \min\{ c_0 \lambda^{-2}, T \}, \qquad 0 < b < t_2.
$$
Then for any $h(t, v) \in C_c^\infty((0, t_2) \times \mathbb{R}^m)$, we have: 
\begin{align*}
&\quad \int_0^{t_2} \int_{\mathbb{R}^m} (t + b) |\Div_v \left( A \nabla_v \left( (t + b)^{-\alpha} h \right) \right)|^2 dvdt + \alpha^2 \int_0^{t_2} \int_{\mathbb{R}^m} (t + b)^{-1} |(t + b)^{-\alpha} h|^2 dvdt \\
&\quad -(2\alpha - 1) \int_0^{t_2} \int_{\mathbb{R}^m} A \nabla_v \left( (t + b)^{-\alpha} h \right) \overline{\nabla_v \left( (t + b)^{-\alpha} h \right)} dvdt \\
&\quad + \int_0^{t_2} \int_{\mathbb{R}^m} (t + b) (\partial_t A) \nabla_v \left( (t + b)^{-\alpha} h \right) \overline{\nabla_v \left( (t + b)^{-\alpha} h \right)} dvdt \\
&\ge c_1 \int_0^{t_2} \int_{\mathbb{R}^m} |\nabla_v \left( (t + b)^{-\alpha} h \right)|^2 dvdt + c_1 \alpha \int_0^{t_2} \int_{\mathbb{R}^m} (t + b)^{-1} |(t + b)^{-\alpha} h|^2 dvdt, 
\end{align*}
where the constant $c_1$ depends only on $\lambda, m$.
\end{lemma}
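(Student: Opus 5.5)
Throughout we set $f:=(t+b)^{-\alpha}h\in C_c^\infty((0,t_2)\times\mathbb{R}^m)$ and $X:=\Div_v(A\nabla_v f)$. The plan is a purely pointwise-in-time argument: we trade the large negative gradient term $-(2\alpha-1)\int A\nabla_v f\cdot\overline{\nabla_v f}$ for the two positive terms $\int (t+b)|X|^2$ and $\alpha^2\int (t+b)^{-1}|f|^2$ via one integration by parts in $v$ and Cauchy--Schwarz. The trade is done so that a small, $\alpha$-independent portion of the gradient term and a portion of order $\alpha$ of the $|f|^2$ term are left over. No integration by parts in $t$ is needed.

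\textbf{Step 1 (integration by parts).} For each fixed $t$, $f(t,\cdot)$ has compact support, and $A$ is real, symmetric and Lipschitz. Hence
\[
\int_{\mathbb{R}^m} A\nabla_v f\cdot\overline{\nabla_v f}\,dv=-\operatorname{Re}\int_{\mathbb{R}^m} X\,\overline{f}\,dv,
\]
and this quantity is real and at least $\lambda^{-1}\int|\nabla_v f|^2dv$ by \eqref{condition-A}.

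\textbf{Step 2 (splitting the negative term).} We write
\[
2\alpha-1=\Bigl(2\alpha-\tfrac12\Bigr)-\tfrac12 ,
\]
and treat the $(2\alpha-\frac12)$-part by Step 1 and Cauchy--Schwarz with weights $(t+b)^{\pm1/2}$:
\[
\Bigl(2\alpha-\tfrac12\Bigr)\Bigl|\int X\overline f\Bigr|\le \int (t+b)|X|^2+\Bigl(\alpha-\tfrac14\Bigr)^2\int (t+b)^{-1}|f|^2 .
\]
The first term on the right is exactly cancelled by the first term of the lemma. The $|f|^2$ contributions then combine to
\[
\alpha^2-\Bigl(\alpha-\tfrac14\Bigr)^2=\tfrac{\alpha}{2}-\tfrac1{16}\ \ge\ \tfrac{\alpha}{4}\qquad(\alpha\ge1),
\]
which yields $\frac{\alpha}{4}\int (t+b)^{-1}|f|^2$. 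The remaining $+\frac12\int A\nabla_v f\cdot\overline{\nabla_v f}$ (coming from the $-\frac12$ in the splitting) is bounded below by $\frac1{2\lambda}\int|\nabla_v f|^2$.

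\textbf{Step 3 (the $\partial_tA$ term).} Since $t+b\le 2t_2\le 2c_0\lambda^{-2}$ and $|\partial_tA|\le C_m\lambda$ by \eqref{condition-3}, we have
\[
\Bigl|\int (t+b)(\partial_tA)\nabla_v f\cdot\overline{\nabla_v f}\Bigr|\le 2C_m c_0\lambda^{-1}\int|\nabla_v f|^2 .
\]
Choosing $c_0$ small (depending on $m$), this is at most $\frac1{4\lambda}\int|\nabla_v f|^2$ and is absorbed by the leftover from Step 2.

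Collecting the three steps gives the lemma with $c_1=\min\{\frac1{4\lambda},\frac14\}$, which depends only on $\lambda$ and $m$. The main point to get right is Step 2. The naive Cauchy--Schwarz applied to all of $2\alpha\int A\nabla_v f\cdot\overline{\nabla_v f}$ is exactly critical and leaves nothing over. The $-1$ in $2\alpha-1$ is what provides the margin. It must be split so that part of it survives as coercivity of the gradient and part of it becomes the $O(\alpha)$ surplus in the $|f|^2$ term; the split $2\alpha-\frac12$ versus $-\frac12$ above does both. Everything else is routine, provided one keeps track of real parts, since $h$ may be complex-valued.
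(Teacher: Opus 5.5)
Your proof is correct and follows essentially the paper's argument: the identity $\int A\nabla_v f\cdot\overline{\nabla_v f}\,dv=-\mathrm{Re}\int X\overline{f}\,dv$, weighted Cauchy--Schwarz with $(t+b)^{\pm1/2}$, and absorbing the $\partial_tA$ term via $t+b\le 2c_0\lambda^{-2}$ with $c_0$ small depending on $m$. The only difference is bookkeeping: the paper completes the square to obtain two separate lower bounds, $Q(t)\ge E(t)$ and $Q(t)\ge\frac34\alpha(t+b)^{-1}\|f\|_{L^2_v}^2$, and then averages them, whereas you split $2\alpha-1=(2\alpha-\frac12)-\frac12$ and obtain both the gradient and the $|f|^2$ gains in a single step.
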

We next state the second technical estimate.
\begin{lemma}\label{lemma02}
Let the constants $\varepsilon_0, c_0, \alpha_0$ be as in Proposition \ref{local Carleman}, $\alpha > \alpha_0$ and $t_2, b$ satisfy 
$$
0 < t_2 < \min\{ c_0 \lambda^{-2}, T \}, \qquad 0 < b < t_2.
$$
Fix $\rho \in \mathbb{R}^n$. If 
\begin{equation*}
\sup_{0 < t < t_2} (t + b)^3 |B_1^T e^{-t B_2^T} \rho|^2 \le \varepsilon_0 \alpha.
\end{equation*}
Then, for every sufficiently small $\varepsilon>0$, there exists a constant $C_\varepsilon>0$, depending only on $\varepsilon,\lambda,m,n,B_1,B_2$, such that for every $h\in C_c^\infty((0,t_2)\times\mathbb R^m)$, one has
\begin{align*}
&\quad \left| 2Re \int_0^{t_2} \int_{\mathbb{R}^m} (t + b) i (B_1^T e^{-tB_2^T} \rho) \cdot v \left( (t + b)^{-\alpha} h \right) \overline{(\frac{\alpha}{t + b} + \Div_v \left( A \nabla_v \right)) \left( (t + b)^{-\alpha} h \right)} dvdt \right| \\
&\le \varepsilon \int_0^{t_2} \int_{\mathbb{R}^m} |\nabla_v \left( (t + b)^{-\alpha} h \right)|^2 dvdt + C_\varepsilon \varepsilon_0 \alpha \int_0^{t_2} \int_{\mathbb{R}^m} (t + b)^{-1} |(t + b)^{-\alpha} h|^2 dvdt.
\end{align*}
\end{lemma}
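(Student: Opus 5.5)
The plan is to expose the skew-adjoint structure of the linear term $i(\xi\cdot v)$ against the self-adjoint operators $\alpha/(t+b)$ and $\Div_v(A\nabla_v)$, by a single integration by parts in $v$. Almost everything cancels on taking real parts, and only a commutator term of size $|\xi|$ survives; the hypothesis of the lemma then controls it.

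Write $f:=(t+b)^{-\alpha}h\in C_c^\infty((0,t_2)\times\mathbb R^m)$ and $\xi(t):=B_1^Te^{-tB_2^T}\rho\in\mathbb R^m$. Note that $\xi$ is real-valued, since $B_1,B_2,\rho$ are real. The quantity to bound splits as $I_1+I_2$, where
\[
I_1=2\,\mathrm{Re}\int_0^{t_2}\!\!\int_{\mathbb R^m} i\alpha\,(\xi\cdot v)\,|f|^2\,dvdt,\qquad
I_2=2\,\mathrm{Re}\int_0^{t_2}\!\!\int_{\mathbb R^m}(t+b)\,i\,(\xi\cdot v)\,f\,\overline{\Div_v(A\nabla_v f)}\,dvdt .
\]
The integrand of $I_1$ is purely imaginary, so $I_1=0$.

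For $I_2$ I will integrate by parts in $v$ for each fixed $t$. This is legitimate because $f$ has compact support in $v$, which also handles the unbounded factor $v$, and $A$ is Lipschitz in $v$. Using $\overline{A}=A$, this gives
\[
I_2=-2\,\mathrm{Re}\iint(t+b)\,i\,(\xi\cdot v)\,A\nabla_v f\cdot\overline{\nabla_v f}\;-\;2\,\mathrm{Re}\iint(t+b)\,i\,f\,(A\xi)\cdot\overline{\nabla_v f}.
\]
Since $A$ is real symmetric, $A\nabla_v f\cdot\overline{\nabla_v f}$ is real. The first integrand is therefore $i$ times a real function, and its real part vanishes. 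This cancellation is the key point: it removes the only term in which the unbounded weight $v$ multiplies a second-order quantity.

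For the remaining term I will use $|A\xi|\le\lambda|\xi|$ together with Cauchy–Schwarz and Young's inequality:
\[
|I_2|\le 2\lambda\iint (t+b)|\xi||f||\nabla_v f|\le \varepsilon\iint|\nabla_v f|^2+\frac{\lambda^2}{\varepsilon}\iint (t+b)^2|\xi(t)|^2|f|^2 .
\]
Finally, I will write $(t+b)^2|\xi|^2=(t+b)^{-1}\cdot(t+b)^3|\xi|^2$ and invoke the assumption $\sup_{0<t<t_2}(t+b)^3|\xi(t)|^2\le\varepsilon_0\alpha$. This bounds the last term by $\lambda^2\varepsilon^{-1}\varepsilon_0\alpha\iint(t+b)^{-1}|f|^2$, which is the claim with $C_\varepsilon=\lambda^2/\varepsilon$; this constant depends only on $\varepsilon$ and $\lambda$.

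No step is a serious obstacle. The only delicate point is recognizing the two cancellations, both of which come from taking the real part of $i$ times a real quantity, and checking that the integration by parts is justified despite the factor $v$. Compact support of $h$ in $v$ makes it justified.
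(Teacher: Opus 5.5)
Your proposal is correct and follows essentially the same route as the paper. In both, the $\alpha/(t+b)$ term vanishes as the real part of an imaginary quantity, one integration by parts in $v$ kills the $(\xi\cdot v)\,A\nabla_v f\cdot\overline{\nabla_v f}$ term for the same reason, and the surviving commutator term $i f\,(A\xi)\cdot\overline{\nabla_v f}$ is handled by Young's inequality together with the hypothesis $\sup_{0<t<t_2}(t+b)^3|\xi|^2\le\varepsilon_0\alpha$. Your use of $|A\xi|\le\lambda|\xi|$ simply gives the explicit constant $C_\varepsilon=\lambda^2/\varepsilon$ in place of the paper's $C_m$-dependent one.
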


\bigskip
\noindent

We now prove Proposition \ref{local Carleman}.
\begin{proof}[{\bf Proof of Proposition \ref{local Carleman}}]
We divide the proof into four steps.

{\em Step 1. Decomposition without the trace term.} We first remove the trace term and work with the rest terms of $\widetilde{P}$, for simplicity, we define it as follows:
\begin{equation}\label{0rho}
\widetilde{P}_\rho^0 = \partial_t + i (B_1^T e^{-t B_2^T \rho}) \cdot v + \Div_v (A(t, v) \nabla_v).
\end{equation}
Direct computation gives that: 
\begin{align*}
(t + b)^{-\alpha} \widetilde{P}_\rho^0 h &= \partial_t \left( (t + b)^{-\alpha} h \right) + \frac{\alpha}{t + b} (t + b)^{-\alpha} h + i (B_1^T e^{-tB_2^T} \rho) \cdot v (t + b)^{-\alpha} h + \Div_v \left( A \nabla_v ((t + b)^{-\alpha} h) \right) \\
&= (K_1 + K_2) (t + b)^{-\alpha} h, 
\end{align*}
where 
\begin{equation}\label{K1}
K_1 := \partial_t + i (B_1^T e^{-tB_2^T} \rho) \cdot v
\end{equation}
is the formally skew-symmetric part and
\begin{equation}\label{K2}
K_2 := \frac{\alpha}{t + b} + \Div_v \left( A \nabla_v \right)
\end{equation}
is the formally self-adjoint part. Ignoring the trace term for the moment, we obtain 
\begin{align*}
\int_0^{t_2} \int_{\mathbb{R}^m} (t + b)^{-2\alpha + 1} |\widetilde{P}_\rho^0 h|^2 dvdt &\ge 2 Re\int_0^{t_2} \int_{\mathbb{R}^m} (t + b) K_1 \left( (t + b)^{-\alpha} h \right) \cdot \overline{K_2 \left( (t + b)^{-\alpha} h \right)} dv dt \\
&\quad + \int_0^{t_2} \int_{\mathbb{R}^m} (t + b) |K_2 \left( (t + b)^{-\alpha} h \right)|^2 dvdt.
\end{align*}
Splitting the $K_1$-term according to \eqref{K1}, we obtain
\begin{equation}\label{chaifen}
\int_0^{t_2} \int_{\mathbb{R}^m} (t + b)^{-2\alpha + 1} |\widetilde{P}_\rho^0 h|^2 dvdt \ge J_1 + J_2, 
\end{equation}
where 
\begin{equation}\label{J1}
\begin{aligned}
J_1 &= 2Re \int_0^{t_2} \int_{\mathbb{R}^m} (t + b) \partial_t \left( (t + b)^{-\alpha} h \right) \overline{K_2 \left( (t + b)^{-\alpha} h \right)} dvdt \\
&\quad + \int_0^{t_2} \int_{\mathbb{R}^m} (t + b) |K_2\left( (t + b)^{-\alpha} h \right)|^2 dvdt, 
\end{aligned}
\end{equation}
and
\begin{equation}\label{J2}
J_2 = 2Re \int_0^{t_2} \int_{\mathbb{R}^m} (t + b) i (B_1^T e^{-tB_2^T} \rho) \cdot v \left( (t + b)^{-\alpha} h \right) \overline{K_2 \left( (t + b)^{-\alpha} h \right)} dvdt.
\end{equation}

{\em Step 2. Computation of $J_1$.} We first treat the first term in $J_1$, defined in \eqref{J1}. By \eqref{K2}, we have
\begin{equation}\label{computation-J1-0}
\begin{aligned}
&\quad 2Re \int_0^{t_2} \int_{\mathbb{R}^m} (t + b) \partial_t \left( (t + b)^{-\alpha} h \right) \overline{K_2 \left( (t + b)^{-\alpha} h \right)} dvdt \\
&= 2Re \int_0^{t_2} \int_{\mathbb{R}^m} (t + b) \partial_t \left( (t + b)^{-\alpha} h \right) \overline{\Div_v \left( A \nabla_v \left( (t + b)^{-\alpha} h \right) \right)} dvdt, 
\end{aligned}
\end{equation}
where we use the fact that 
$$
2\alpha Re\int_0^{t_2} \int_{\mathbb{R}^m} \partial_t \left( (t + b)^{-\alpha} h \right) \cdot \overline{(t + b)^{-\alpha} h} dvdt = 0, 
$$
because $(t+b)^{-\alpha}h$ is compactly supported in $(0, t_2)$. Then by integration by parts, we have 
\begin{equation}\label{computation-J1-1}
\begin{aligned}
&\quad 2Re \int_0^{t_2} \int_{\mathbb{R}^m} (t + b) \partial_t \left( (t + b)^{-\alpha} h \right) \overline{\Div_v \left( A \nabla_v \left( (t + b)^{-\alpha} h \right) \right)} dvdt \\
&= \int_0^{t_2} \int_{\mathbb{R}^m} A \nabla_v \left( (t + b)^{-\alpha} h \right) \overline{\nabla_v \left( (t + b)^{-\alpha} h \right)} dvdt \\
&\quad + \int_0^{t_2} \int_{\mathbb{R}^m} (t + b) (\partial_t A) \nabla_v \left( (t + b)^{-\alpha} h \right) \overline{\nabla_v \left( (t + b)^{-\alpha} h \right)} dvdt.
\end{aligned}
\end{equation}
We next compute the second term in $J_1$. Using \eqref{K2}, we get
\begin{equation}\label{computation-J1-2}
\begin{aligned}
\int_0^{t_2} \int_{\mathbb{R}^m} (t + b) |K_2 \left( (t + b)^{-\alpha} h \right)|^2 dvdt &= \alpha^2 \int_0^{t_2} \int_{\mathbb{R}^m} (t + b)^{-1} |(t + b)^{-\alpha} h|^2 dvdt \\
&\quad + \int_0^{t_2} \int_{\mathbb{R}^m} (t + b) |\Div_v \left( A \nabla_v \left( (t + b)^{-\alpha} h  \right) \right)|^2 dvdt \\
&\quad - 2\alpha \int_0^{t_2} \int_{\mathbb{R}^m} A \nabla_v \left( (t + b)^{-\alpha} h \right) \overline{\nabla_v \left( (t + b)^{-\alpha} h \right)} dvdt.
\end{aligned}
\end{equation}
Combining \eqref{computation-J1-0} -- \eqref{computation-J1-2}, we obtain  
\begin{equation}\label{exp-J1}
\begin{aligned}
J_1 &= \int_0^{t_2} \int_{\mathbb{R}^m} (t + b) |\Div_v \left( A \nabla_v \left( (t + b)^{-\alpha} h \right) \right)|^2 dvdt + \alpha^2 \int_0^{t_2} \int_{\mathbb{R}^m} (t + b)^{-1} |(t + b)^{-\alpha} h|^2 dvdt \\
&\quad -(2\alpha - 1) \int_0^{t_2} \int_{\mathbb{R}^m} A \nabla_v \left( (t + b)^{-\alpha} h \right) \overline{\nabla_v \left( (t + b)^{-\alpha} h \right)} dvdt \\
&\quad + \int_0^{t_2} \int_{\mathbb{R}^m} (t + b) (\partial_t A) \nabla_v \left( (t + b)^{-\alpha} h \right) \overline{\nabla_v \left( (t + b)^{-\alpha} h \right)} dvdt.
\end{aligned}
\end{equation}

{\em Step 3. Lower bound without the trace term.} 
By Lemma \ref{lemma01}, we have 
$$
J_1 \ge c_1 \int_0^{t_2} \int_{\mathbb{R}^m} |\nabla_v \left( (t + b)^{-\alpha} h \right)|^2 dvdt + c_1 \alpha \int_0^{t_2} \int_{\mathbb{R}^m} (t + b)^{-1} |(t + b)^{-\alpha} h|^2 dvdt.
$$
Lemma \ref{lemma02} gives 
$$
|J_2| \le \varepsilon \int_0^{t_2} \int_{\mathbb{R}^m} |\nabla_v \left( (t + b)^{-\alpha} h \right)|^2 dvdt + C_\varepsilon \varepsilon_0 \alpha \int_0^{t_2} \int_{\mathbb{R}^m} (t + b)^{-1} |(t + b)^{-\alpha} h|^2 dvdt.
$$
We now choose $\varepsilon > 0$ small enough that $\varepsilon \le \frac{c_1}{2}$, then choose $\varepsilon_0$ small enough that $C_\varepsilon \varepsilon_0 \le \frac{c_1}{2}$. Thus, we have
\begin{equation}\label{lower-bound-tmp}
\int_0^{t_2} \int_{\mathbb{R}^m} (t + b)^{-2\alpha + 1} |\widetilde{P}_\rho^0 h|^2 dvdt \ge C_1 \int_0^{t_2} \int_{\mathbb{R}^m} (t + b)^{-2\alpha} |\nabla_v h|^2 dvdt + C_1 \alpha \int_0^{t_2} \int_{\mathbb{R}^m} (t + b)^{-2\alpha - 1} |h|^2 dvdt.
\end{equation} 

{\em Step 4. Absorption of the trace term.} Recall definitions \eqref{def-01} and \eqref{0rho}, we have 
$$
|\widetilde{P}_\rho^0 h|^2 \le 2 |\widetilde{P}_\rho h|^2 + \frac12 |\operatorname{tr} B_2|^2 |h|^2.
$$
Thus \eqref{lower-bound-tmp} gives:
\begin{align*}
&\quad \int_0^{t_2} \int_{\mathbb{R}^m} (t + b)^{-2\alpha} |\nabla_v h|^2 dvdt + \alpha \int_0^{t_2} \int_{\mathbb{R}^m} (t + b)^{-2\alpha - 1} |h|^2 dvdt \\
&\le C_2 \int_0^{t_2} \int_{\mathbb{R}^m} (t + b)^{-2\alpha + 1} |\widetilde{P}_\rho h|^2 dvdt + C_2 |\operatorname{tr} B_2|^2 \int_0^{t_2} \int_{\mathbb{R}^m} (t + b)^{-2\alpha + 1} |h|^2 dvdt.
\end{align*}
Since 
$$
(t + b)^{-2\alpha + 1} = (t + b)^2 (t + b)^{-2\alpha - 1} 
$$
and $0 < b < t_2$, then 
$$
C_2 |\operatorname{tr} B_2|^2 \int_0^{t_2} \int_{\mathbb{R}^m} (t + b)^{-2\alpha + 1} |h|^2 dvdt \le 4C_2 |\operatorname{tr} B_2|^2 t_2^2 \int_0^{t_2} \int_{\mathbb{R}^m} (t + b)^{-2\alpha - 1} |h|^2 dvdt.
$$
Considering $t_2\leq c_0\lambda^{-2}$, choosing $\alpha_0$ sufficiently large, depending on $\lambda$ and $B_2$, allows the last term to be absorbed into 
$$
\alpha \int_0^{t_2} \int_{\mathbb{R}^m} (t + b)^{-2\alpha - 1} |h|^2 dvdt.
$$
This proves \eqref{Carleman-2} and completes the proof of Proposition \ref{local Carleman}.
\end{proof}

\bigskip
We next derive Theorem \ref{global Carleman} from Proposition \ref{local Carleman}. The idea is to apply Proposition \ref{local Carleman} for each fixed invariant frequency $\rho$, and then integrate over the low-frequency region $\mathcal U_R$.
\begin{proof}[{\bf Proof of Theorem \ref{global Carleman}}]
By definitions \eqref{T} and \eqref{S}, 
$$
\mathcal{S}_R g = \mathcal{T}^{-1} \mathbf{1}_{\mathcal{U}_R} \mathcal{T} g,  
$$ 
we have 
$$
\mathcal{T} (\mathcal{S}_R g) (t, v, \rho) = \mathbf{1}_{\mathcal{U}_R} (\rho) (\mathcal{T} g) (t, v, \rho).
$$
Since $\mathcal{T}$ occurs only on the degenerate variables, it commutes with $\nabla_v$. Hence 
$$
\mathcal{T} (\nabla_v \mathcal{S}_R g) = \nabla_v \mathcal{T} (\mathcal{S}_R g) = \mathbf{1}_{\mathcal{U} _R} (\rho) \nabla_v (\mathcal{T} g).
$$
By Plancherel's theorem, we have 
\begin{equation}\label{prop-01}
\int_0^{t_2} \int_{\mathbb{R}^m} \int_{\mathbb{R}^n} (t + b)^{-2\alpha} |\nabla_v \mathcal{S}_R g|^2 dvdwdt = \int_{\mathcal{U}_R} \int_0^{t_2} \int_{\mathbb{R}^m} (t + b)^{-2\alpha} |\nabla_v (\mathcal{T} g)|^2 dvdtd\rho,
\end{equation}
and similarly, 
\begin{equation}\label{prop-02}
\int_0^{t_2} \int_{\mathbb{R}^m} \int_{\mathbb{R}^n} (t + b)^{-2\alpha - 1} |\mathcal{S}_R g|^2 dvdwdt = \int_{\mathcal{U}_R} \int_0^{t_2} \int_{\mathbb{R}^m} (t + b)^{-2\alpha - 1} |\mathcal{T} g|^2 dvdtd\rho.
\end{equation}

For every fixed $\rho \in \mathcal{U}_R$, set 
$$
h(t, v) = (\mathcal{T} g) (t, v, \rho) \in C_c^\infty ((0, t_2) \times \mathbb{R}^m).
$$
Choose $C_* \ge \varepsilon_0^{-1}$. Then the assumption $\alpha \ge \alpha_0 + C_* R$ implies $R \le \varepsilon_0 \alpha$. Therefore, for every $\rho \in \mathcal U_R$, 
$$
\sup_{0 < t < t_2} (t + b)^3 |B_1^T e^{-tB_2^T} \rho|^2 \le \varepsilon_0 \alpha.
$$
Thus we can apply Proposition \ref{local Carleman} to $h(t, v) = (\mathcal{T} g) (t, v, \rho)$, for every $\rho \in \mathcal{U}_R$, we have 
\begin{align*}
&\quad \int_0^{t_2} \int_{\mathbb{R}^m} (t + b)^{-2\alpha} |\nabla_v (\mathcal{T} g)|^2 dvdt + \alpha \int_0^{t_2} \int_{\mathbb{R}^m} (t + b)^{-2\alpha - 1} |\mathcal{T} g|^2 dvdt \\
&\le C\int_0^{t_2} \int_{\mathbb{R}^m} (t + b)^{-2\alpha + 1} |\widetilde{P}_\rho (\mathcal{T} g)|^2 dvdt.
\end{align*}
Integrating above inequality with respect to $\rho \in \mathcal{U}_R$, we have 
\begin{equation}\label{propp}
\begin{aligned}
&\quad \int_{\mathcal{U}_R} \int_0^{t_2} \int_{\mathbb{R}^m} (t + b)^{-2\alpha} |\nabla_v (\mathcal{T} g)|^2 dvdt d\rho + \alpha \int_{\mathcal{U}_R} \int_0^{t_2} \int_{\mathbb{R}^m} (t + b)^{-2\alpha - 1} |\mathcal{T} g|^2 dvdtd\rho \\
&\le C \int_{\mathcal{U}_R} \int_0^{t_2} \int_{\mathbb{R}^m} (t + b)^{-2\alpha + 1} |\widetilde{P}_\rho (\mathcal{T} g)|^2 dvdtd\rho.
\end{aligned}
\end{equation}
Moreover, using $\mathcal T P \mathcal T^{-1}=\widetilde P_\rho$ and
$\mathcal T(\mathcal S_Rg)={\bf 1}_{\mathcal U_R} \mathcal Tg$, we have
$$
\mathcal T(P \mathcal S_Rg) = \widetilde P_\rho({\bf 1}_{\mathcal U_R}\mathcal Tg).
$$
Since $\widetilde P_\rho$ contains no derivatives with respect to $\rho$, and
${\bf 1}_{\mathcal U_R}$ depends only on $\rho$, we get
$$
 \widetilde P_\rho({\bf 1}_{\mathcal U_R}\mathcal Tg) = {\bf 1}_{\mathcal U_R}\widetilde P_\rho(\mathcal Tg).
$$
Therefore
$$
\mathcal T(P \mathcal S_Rg) = {\bf 1}_{\mathcal U_R}\widetilde P_\rho(\mathcal Tg).
$$
Therefore, by Plancherel's Theorem, we have 
\begin{equation}\label{propo}
\int_{\mathcal{U}_R} \int_0^{t_2} \int_{\mathbb{R}^m}  (t + b)^{-2\alpha + 1} |\widetilde{P}_\rho (\mathcal{T} g)|^2 dvdtd\rho = \int_0^{t_2} \int_{\mathbb{R}^m} \int_{\mathbb{R}^n} (t + b)^{-2\alpha + 1} |P \mathcal{S}_R g|^2 dvdwdt.
\end{equation}
Combining the results \eqref{prop-01}, \eqref{prop-02}, \eqref{propo} with \eqref{propp}, we obtain the desired estimate.
\end{proof}

\section{Proof of the main result}\label{proof of BU}
In this section, we use the weak Carleman estimate, Theorem \ref{global Carleman}, to prove a local-in-time backward uniqueness result. We then apply this local result iteratively to prove Theorem \ref{main}.

We first record a consequence of the rank condition \eqref{condition-B1B2} which will be used to pass from
$\mathcal S_Ru=0$ to $u=0$.
\begin{lemma}\label{condition-B1B2-proof}
Assume the rank condition \eqref{condition-B1B2} holds. Let $c_0$ be the constant in Theorem \ref{global Carleman}, and let 
$$
0<t_2\leq \min\{c_0\lambda^{-2},T\},\qquad 0<b\leq t_2.
$$
Then there exists a constant $c_2 > 0$ depending only on $t_2, B_1, B_2$, such that for any $\rho\in\mathbb R^n$, 
$$
\sup_{0<t<t_2}(t+b)^3 \left|B_1^Te^{-tB_2^T}\rho\right|^2 \geq c_2 |\rho|^2.
$$
Consequently, for $\mathcal U_R$ defined by \eqref{low}, the set 
$\mathcal U_R$ is bounded for every $R>0$, and 
$$
\bigcup_{R>0}\mathcal U_R=\mathbb R^n.
$$
\end{lemma}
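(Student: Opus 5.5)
Since $(t+b)^3\ge b^3$, it suffices to bound $\sup_{0<t<t_2}|B_1^Te^{-tB_2^T}\rho|^2$ from below by a multiple of $|\rho|^2$. Strictly, the constant then depends on $b$ as well as on $t_2,B_1,B_2$. One can do better: restricting the supremum to $t\in(t_2/2,t_2)$ gives $(t+b)^3\ge (t_2/2)^3$, which yields a constant depending only on $t_2,B_1,B_2$. I will use this second version. The task is then to show
$$
F(\rho):=\sup_{t_2/2<t<t_2}\left|B_1^Te^{-tB_2^T}\rho\right|^2
$$
satisfies $F(\rho)\ge c|\rho|^2$ with $c>0$.

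First I would prove strict positivity on the unit sphere. Suppose $F(\rho)=0$ for some $\rho$. Then the analytic function $t\mapsto B_1^Te^{-tB_2^T}\rho$ vanishes identically on $(t_2/2,t_2)$, so all of its $t$-derivatives vanish at an interior point $t_*$. The $k$-th derivative is $(-1)^kB_1^T(B_2^T)^ke^{-t_*B_2^T}\rho$. Set $\sigma=e^{-t_*B_2^T}\rho$. Then $B_1^T(B_2^T)^k\sigma=0$ for $k=0,\dots,n-1$, i.e. $\sigma^T B_2^kB_1=0$ for each such $k$. This says $\sigma$ is orthogonal to the column space of $[B_1,B_2B_1,\dots,B_2^{n-1}B_1]$, which is all of $\mathbb R^n$ by \eqref{condition-B1B2}. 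Hence $\sigma=0$, and since $e^{-t_*B_2^T}$ is invertible, $\rho=0$.

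Next, $F$ is a supremum of continuous functions, hence lower semicontinuous; in fact it is continuous, since the family is equi-Lipschitz on bounded sets. Being positive on the compact unit sphere, it attains a minimum $c>0$ there. Since $F(s\rho)=s^2F(\rho)$, this gives $F(\rho)\ge c|\rho|^2$ for all $\rho$, and so
$$
\sup_{0<t<t_2}(t+b)^3\left|B_1^Te^{-tB_2^T}\rho\right|^2\ge (t_2/2)^3F(\rho)\ge c_2|\rho|^2,\qquad c_2=(t_2/2)^3c .
$$

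For the consequences: if $\rho\in\mathcal U_R$ then $c_2|\rho|^2\le R$, so $\mathcal U_R$ is bounded. Conversely, for fixed $\rho$ the quantity $\sup_{0<t<t_2}(t+b)^3|B_1^Te^{-tB_2^T}\rho|^2$ is at most $(2t_2)^3\sup_{0\le t\le t_2}\|e^{-tB_2^T}\|^2\,\|B_1\|^2|\rho|^2<\infty$, so $\rho\in\mathcal U_R$ for $R$ large. Hence $\bigcup_{R>0}\mathcal U_R=\mathbb R^n$. The only substantive step is the analyticity/Kalman-rank argument; the rest is routine compactness and homogeneity.
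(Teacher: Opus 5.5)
Your proof is correct and follows essentially the same route as the paper. The positivity step uses the rank condition \eqref{condition-B1B2} through the first $n-1$ derivatives of $t\mapsto B_1^Te^{-tB_2^T}\rho$, then compactness of the unit sphere and quadratic homogeneity; the only differences are cosmetic: you use the window $(t_2/2,t_2)$ with $(t+b)^3\ge (t_2/2)^3$ and differentiate at an interior point, whereas the paper uses $(t+b)^3\ge t^3$ on $[0,t_2/2]$ and differentiates at $t=0$, and both give $c_2$ depending only on $t_2,B_1,B_2$.
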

\begin{proof}
For simplicity, we set, for $\rho\in\mathbb R^n$, 
$$
G(\rho)=\max_{0\leq t\leq t_2/2}
t^3\left|B_1^Te^{-tB_2^T}\rho\right|^2.
$$
We claim that $G(\rho) > 0$ for any $\rho \ne 0$. Indeed, if $G(\rho) = 0$, then we have 
$$
B_1^Te^{-tB_2^T}\rho=0,\quad 0<t<t_2/2.
$$
Differentiating above equality at $t = 0$ up to order $n - 1$, we obtain 
$$
B_1^T(B_2^T)^k\rho=0,\qquad k = 0, 1, \dots, n - 1.
$$
Thus 
$$
\left[ B_1,B_2B_1,\ldots,B_2^{n-1}B_1 \right]^T \rho = 0.
$$
However, by the rank condition \eqref{condition-B1B2}, the matrix $\left[ B_1,B_2B_1,\ldots,B_2^{n-1}B_1 \right]$ has rank $n$, which means that $\rho = 0$.

As the analysis above, we have $G(\rho) > 0$ for any $\rho \in \mathbb{R}^n$ with $|\rho| = 1$. By using the continuous and homogeneity of $G(\rho)$, there exists a constant $c_2$ depending only on $t_2, B_1, B_2$, such that 
$$
G(\rho)\geq c_2|\rho|^2,\quad \rho\in\mathbb R^n.
$$
Since $(t+b)^3\geq t^3$, we have 
$$
\sup_{0<t<t_2}(t+b)^3 \left|B_1^Te^{-tB_2^T}\rho\right|^2
\geq G(\rho)\geq c_2|\rho|^2.
$$

Therefore, if $\rho\in\mathcal U_R$, then $c_2|\rho|^2\leq R$, and hence $\mathcal U_R$ is bounded. For fixed $\rho \in \mathbb{R}^n$, since
$$
\sup_{0<t<t_2}(t+b)^3 \left|B_1^Te^{-tB_2^T}\rho\right|^2<\infty, 
$$
then $\rho\in\mathcal U_R$ for all sufficiently large $R > 0$, this proves $\bigcup_{R>0}\mathcal U_R=\mathbb R^n$.
\end{proof}

We next prove the following local-in-time backward uniqueness result.
\begin{prop}\label{local time interval}
Assume that the hypotheses of Theorem \ref{main} hold. Let $c_0 > 0$ be the constant in Theorem \ref{global Carleman}. Let
$$
0 < t_1 < t_2 < \min \{ c_0 \lambda^{-2}, T \}.
$$
Then 
$$
u \equiv 0, \quad a.e. \ (t, v, w) \in (0, t_1) \times \mathbb{R}^m \times \mathbb{R}^n.
$$
\end{prop}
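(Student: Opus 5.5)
Fix $R>0$; the goal is $\mathcal S_R u=0$ on $(0,t_1)\times\mathbb R^m\times\mathbb R^n$, after which Lemma \ref{condition-B1B2-proof} and $\bigcup_R\mathcal U_R=\mathbb R^n$ give $\mathcal T u=0$ there, hence $u=0$ by unitarity of $\mathcal T$. We follow the standard backward-uniqueness scheme: apply the weak Carleman estimate (Theorem \ref{global Carleman}) to a time-cutoff of $\mathcal S_R u$, and send $\alpha\to\infty$ with $b$ small.

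\emph{Step 1 (equation for $\mathcal S_Ru$).} Since $A,c,d$ do not depend on $w$, and $\mathcal T$ acts only in $w$ (with $\rho$-dependent multipliers), we have $\mathcal T P\mathcal T^{-1}=\widetilde P_\rho$, and multiplication by $c(t,v)$ or $d(t,v)\cdot\nabla_v$ commutes with $\mathbf 1_{\mathcal U_R}(\rho)$. Thus $f:=\mathcal S_Ru$ satisfies $Pf=cf+d\cdot\nabla_v f$, lies in the same $L^2$ class as $u$, and has zero trace at $t=0$. Moreover, since $\mathcal U_R$ is bounded, $\mathcal Tf$ is supported in a compact set of $\rho$. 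Hence $f$ is smooth in $w$ with all $w$-derivatives in $L^2$. This regularity is what is needed to justify the estimates below.

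\emph{Step 2 (cutoff and approximation).} Take $\chi(t)\in C^\infty$ with $\chi=1$ on $[0,t_1']$ and $\chi=0$ near $t_2$, where $t_1<t_1'<t_2$, and set $g=\chi f$. Theorem \ref{global Carleman} is stated for $C_c^\infty((0,t_2)\times\mathbb R^{m+n})$ functions of the form $\mathcal S_R g$. We extend it to $g$ by density: mollify in $(v,w)$, cut off in space, and near $t=0$ use the zero initial trace, either by extending $f$ by zero for $t<0$ or by shifting slightly in time. Since $b>0$, the weights are bounded near $t=0$, so the limiting procedure is harmless. The terms we must pass to the limit in are $\nabla_v$, $f$, and $Pf$ with its $\partial_t+(B_1v+B_2w)\cdot\nabla_w$ part. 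This uses the fact that the weak formulation controls $\operatorname{div}_v(A\nabla_v f)$ in $L^2_tL^2_wH^{-1}_v$; the cleanest route is to apply the Carleman estimate at the level of each fixed $\rho$ (Proposition \ref{local Carleman}), where the equation becomes a uniformly parabolic equation in $(t,v)$ with a bounded-for-bounded-$v$ potential, and to regularize there. I expect this justification to be the main technical obstacle: the Carleman estimate has $|P\mathcal S_Rg|^2$ on the right, which requires $\operatorname{div}_v(A\nabla_v g)\in L^2$, not just $\nabla_v g\in L^2$. I would first try to obtain interior $H^2_v$ regularity of $\mathcal Tf$ in $v$ from the parabolic equation at fixed $\rho$, using $A\in C^{0,1}$.

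\emph{Step 3 (absorption and limit).} We have $Pg=\chi(cf+d\cdot\nabla_v f)+\chi' f$. On $(0,t_1')$, the lower-order part is bounded by $2\lambda^2\int(t+b)^{-2\alpha+1}(|f|^2+|\nabla_vf|^2)$. Because $(t+b)\le 2t_2\le 2c_0\lambda^{-2}$, this is absorbed into the left side of \eqref{Carleman-1} once $c_0$ is small and $\alpha$ is large. The remaining term is supported in $[t_1',t_2]$ and is bounded by $C(t_1'+b)^{-2\alpha+1}\|f\|^2_{H}$, where $H$ collects the finite $L^2$ norms of $f$ and $\nabla_v f$. On the left side we restrict to $t<t_1$ and use $(t+b)^{-2\alpha-1}\ge(t_1+b)^{-2\alpha-1}$. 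This gives
\[
\alpha\int_0^{t_1}\!\!\int|f|^2\le C\,(t_1+b)^{2\alpha+1}(t_1'+b)^{-2\alpha+1}\|f\|_H^2 .
\]
Since $(t_1+b)/(t_1'+b)<1$, letting $\alpha\to\infty$ (with $\alpha\ge\alpha_0+C_*R$) gives $f=0$ on $(0,t_1)$. Finally, letting $R\to\infty$ concludes the proof.
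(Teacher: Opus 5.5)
Your proposal is correct and follows the paper's proof essentially step for step. You commute $\mathcal S_R$ with $P$, $c$ and $d\cdot\nabla_v$, cut off in time, and apply the weak Carleman estimate extended by density. You then absorb the lower-order terms using $t_2\le c_0\lambda^{-2}$, and let $\alpha\to\infty$ and then $R\to\infty$.

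The only difference is cosmetic. You take $\chi=1$ up to $t_1'>t_1$, which gives the decaying factor $\bigl((t_1+b)/(t_1'+b)\bigr)^{2\alpha-1}$. The paper takes $\chi=1$ up to $t_1$ and uses only the factor $(t_1+b)^2/\alpha$.

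The obstacle you anticipate in Step 2 does not arise. The hypothesis \eqref{condition-2} already includes $(\partial_t+(B_1v+B_2w)\cdot\nabla_w)u\in L^2$, so the equation itself gives $\operatorname{div}_v(A\nabla_v u)=cu+d\cdot\nabla_v u-(\partial_t+(B_1v+B_2w)\cdot\nabla_w)u\in L^2$. No $H^2_v$ regularity is needed for the density extension.
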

\begin{proof}
Fix $R > 0$. By $\mathcal S_R=\mathcal T^{-1}{\bf 1}_{\mathcal U_R} \mathcal T$ and $\mathcal T P \mathcal T^{-1}=\widetilde P_\rho$, we have
$$
\mathcal T(P \mathcal S_Ru) = \widetilde P_\rho({\bf 1}_{\mathcal U_R}\mathcal Tu).
$$
Since $\widetilde P_\rho$ contains no derivatives with respect to $\rho$, and ${\bf 1}_{\mathcal U_R}$ depends only on $\rho$, it follows that
$$
\widetilde P_\rho({\bf 1}_{\mathcal U_R}\mathcal Tu) = {\bf 1}_{\mathcal U_R}\widetilde P_\rho(\mathcal Tu).
$$
Therefore
$$
\mathcal T(P \mathcal S_Ru) = {\bf 1}_{\mathcal U_R}\widetilde P_\rho(\mathcal Tu) = \mathcal T(\mathcal S_RPu),
$$
and hence
$$
P \mathcal S_Ru=\mathcal S_R Pu.
$$
Similarly, since $T$ does not change the $v$-variable and
${\bf 1}_{\mathcal U_R}$ is independent of $v$, we have
$$
\nabla_v \mathcal S_Ru=\mathcal S_R\nabla_vu.
$$
Moreover, since $c(t,v)$ and $d(t,v)$ are independent of $w$,
$$
\mathcal S_R(cu)=c\mathcal S_Ru,\qquad \mathcal S_R(d\cdot\nabla_vu)=d\cdot\nabla_v \mathcal S_Ru.
$$
Using $Pu=cu+d\cdot\nabla_vu$, we obtain
$$
P(\mathcal S_Ru) = \mathcal S_RPu = \mathcal S_R(cu+d\cdot\nabla_vu) = c\mathcal S_Ru+d\cdot\nabla_v \mathcal S_Ru.
$$

Choose a smooth cutoff function $\chi(t) \in [0, 1]$ satisfies:  
\begin{equation}\label{chi}
\chi(t) = 
\begin{cases}
1, \quad t \le t_1, \\
0, \quad t > t_2.
\end{cases}
\end{equation}
Then $supp \chi'(t) \subset (t_1, t_2)$, and $\chi(t) \mathcal{S}_R u \big|_{t = 0} = 0$. Since $\chi(t)$ is independent of $w$, we also have $\mathcal S_R(\chi u)=\chi \mathcal S_Ru$. Thus 
$$
P \mathcal S_R(\chi u) = P(\chi \mathcal S_Ru) = \chi P(\mathcal S_Ru)+\chi' \mathcal S_Ru,
$$
and hence
$$
P \mathcal S_R(\chi u) = c\chi \mathcal S_Ru+d\cdot\nabla_v(\chi \mathcal S_Ru)+\chi' \mathcal S_Ru .
$$
Although Theorem \ref{global Carleman} stated for functions in $C_c^\infty((0,t_2)\times\mathbb R^m\times\mathbb R^n)$, estimate \eqref{Carleman-1} extends by density to all functions $g$ such that
$$
\mathcal S_Rg,\quad \nabla_v \mathcal S_Rg,\quad P \mathcal S_Rg
\in L^2((0,t_2)\times\mathbb R^{m} \times \mathbb{R}^n), 
$$
provided that $\mathcal S_Rg$ has compact support in time in $[0,t_2)$ and has zero $L^2$-trace at $t = 0$. Indeed, this follows by applying \eqref{Carleman-1} to smooth compactly supported approximations $g_k$ satisfying:
$$
\mathcal S_Rg_k\to \mathcal S_Rg,\qquad
\nabla_v \mathcal S_Rg_k\to\nabla_v \mathcal S_Rg,\qquad
P \mathcal S_Rg_k\to P \mathcal S_Rg, 
$$
in $L^2$, and then passing to the limit. We apply this extension with $g=\chi u$. Since
$$
P \mathcal S_R(\chi u)=P(\chi \mathcal S_Ru) = c\chi \mathcal S_Ru+d\cdot\nabla_v(\chi \mathcal S_Ru)+\chi' \mathcal S_Ru \in L^2, 
$$
hence Theorem \ref{global Carleman} applies to $g = \chi u$. For $\alpha \ge \alpha_0 + C_*R$, we obtain  
\begin{equation}\label{apply Carleman}
\begin{aligned}
&\quad \int_0^{t_2} \int_{\mathbb{R}^m} \int_{\mathbb{R}^n} (t + b)^{-2\alpha} |\nabla_v (\chi \mathcal{S}_R u)|^2 dwdvdt + \alpha \int_0^{t_2} \int_{\mathbb{R}^m} \int_{\mathbb{R}^n} (t + b)^{-2\alpha - 1} |\chi \mathcal{S}_R u|^2 dwdvdt \\
&\le 3C \int_0^{t_2} \int_{\mathbb{R}^m} \int_{\mathbb{R}^n} (t + b)^{-2\alpha + 1} |c\chi \mathcal{S}_R u|^2 dwdvdt + 3C \int_0^{t_2} \int_{\mathbb{R}^m} \int_{\mathbb{R}^n} (t + b)^{-2\alpha + 1} |d \cdot \nabla_v (\chi \mathcal{S}_R u)|^2 dwdvdt\\
&\quad + 3C \int_0^{t_2} \int_{\mathbb{R}^m} \int_{\mathbb{R}^n} (t + b)^{-2\alpha + 1} |\chi' \mathcal{S}_R u|^2 dwdvdt \\
&= I_1 + I_2 + I_3.
\end{aligned}
\end{equation}

For $I_1$, since 
$$
(t + b)^{-2\alpha + 1} = (t + b)^2 (t + b)^{-2\alpha - 1} \le 4t_2^2 (t + b)^{-2\alpha - 1}, 
$$
and $|c| \le \lambda$ by \eqref{condition-3}, we have
$$
I_1 \le 12C \lambda^2 t_2^2 \int_0^{t_2} \int_{\mathbb{R}^m} \int_{\mathbb{R}^n} (t + b)^{-2\alpha - 1} |\chi \mathcal{S}_R u|^2 dwdvdt.
$$ 
Choosing $c_0$ sufficiently small so that $12C c_0^2 \le \frac14$ and using $\alpha \ge \alpha_0$, the term $I_1$ can be absorbed by 
$$
\alpha \int_0^{t_2} \int_{\mathbb{R}^m} \int_{\mathbb{R}^n} (t + b)^{-2\alpha - 1} |\chi \mathcal{S}_R u|^2 dwdvdt.
$$

For $I_2$, since $t + b \le 2t_2$, we have 
$$
I_2 \le 6C t_2 \lambda^2 \int_0^{t_2} \int_{\mathbb{R}^m} \int_{\mathbb{R}^n} (t + b)^{-2\alpha} |\nabla_v (\chi \mathcal{S}_R u)|^2 dwdvdt.
$$
Since $t_2 \le c_0 \lambda^{-2}$, choosing $c_0$ sufficiently small so that 
$
6Cc_0 \leq \frac14
$,
then $I_2$ can be absorbed by 
$$
\int_0^{t_2} \int_{\mathbb{R}^m} \int_{\mathbb{R}^n} (t + b)^{-2\alpha} |\nabla_v (\chi \mathcal{S}_R u)|^2 dwdvdt.
$$

For $I_3$, since $supp \chi'(t) \subset (t_1, t_2)$, we have 
$$
I_3 \le C_\chi \int_{t_1}^{t_2} \int_{\mathbb{R}^m} \int_{\mathbb{R}^n} (t + b)^{-2\alpha + 1} |\mathcal{S}_R u|^2 dwdvdt, 
$$
where $C_\chi$ depends on the chosen $\chi$, but is independent of $R, \alpha$ and $u$.

Combining \eqref{apply Carleman} with the estimates for $I_1$, $I_2$ and $I_3$, we obtain  
\begin{align*}
&\quad \int_0^{t_2} \int_{\mathbb{R}^m} \int_{\mathbb{R}^n} (t + b)^{-2\alpha} |\nabla_v (\chi \mathcal{S}_R u)|^2 dwdvdt + \alpha \int_0^{t_2} \int_{\mathbb{R}^m} \int_{\mathbb{R}^n} (t + b)^{-2\alpha - 1} |\chi \mathcal{S}_R u|^2 dwdvdt \\
&\le C_\chi \int_{t_1}^{t_2} \int_{\mathbb{R}^m} \int_{\mathbb{R}^n} (t + b)^{-2\alpha + 1} |\mathcal{S}_R u|^2 dwdvdt. 
\end{align*}
In particular,  
$$
\alpha \int_0^{t_2} \int_{\mathbb{R}^m} \int_{\mathbb{R}^n} (t + b)^{-2\alpha - 1} |\chi \mathcal{S}_R u|^2 dwdvdt \le C_\chi \int_{t_1}^{t_2} \int_{\mathbb{R}^m} \int_{\mathbb{R}^n} (t + b)^{-2\alpha + 1} |\mathcal{S}_R u|^2 dwdvdt.
$$
Since $\chi = 1$ on $0 < t < t_1$, the above inequality implies 
\begin{equation}\label{555}
\alpha \int_0^{t_1} \int_{\mathbb{R}^m} \int_{\mathbb{R}^n} (t + b)^{-2\alpha - 1} |\mathcal{S}_R u|^2 dwdvdt \le C_\chi \int_{t_1}^{t_2} \int_{\mathbb{R}^m} \int_{\mathbb{R}^n} (t + b)^{-2\alpha + 1} |\mathcal{S}_R u|^2 dwdvdt.
\end{equation}

Since 
$$
\alpha \int_0^{t_1} \int_{\mathbb{R}^m} \int_{\mathbb{R}^n} (t + b)^{-2\alpha - 1} |\mathcal{S}_R u|^2 dwdvdt \ge \alpha (b + t_1)^{-2\alpha - 1} \int_0^{t_1} \int_{\mathbb{R}^m} \int_{\mathbb{R}^n} |\mathcal{S}_R u |^2 dwdvdt, 
$$
and 
$$
C_\chi \int_{t_1}^{t_2} \int_{\mathbb{R}^m} \int_{\mathbb{R}^n} (t + b)^{-2\alpha + 1} |\mathcal{S}_R u|^2 dwdvdt \le C_\chi (b + t_1)^{-2\alpha + 1} \int_{t_1}^{t_2} \int_{\mathbb{R}^m} \int_{\mathbb{R}^n} |\mathcal{S}_R u|^2 dwdvdt, 
$$
hence \eqref{555} gives: 
$$
\int_0^{t_1} \int_{\mathbb{R}^m} \int_{\mathbb{R}^n} |\mathcal{S}_R u |^2 dwdvdt \le \frac{C_\chi (b + t_1)^2}{\alpha} \int_{t_1}^{t_2} \int_{\mathbb{R}^m} \int_{\mathbb{R}^n} |\mathcal{S}_R u|^2 dwdvdt.
$$
For fixed $R>0$, the unitarity of $T$ gives
$$
        \|\mathcal S_Ru\|_{L^2((t_1,t_2)\times\mathbb R^{m} \times \mathbb{R}^n)} = \|{\bf 1}_{\mathcal U_R}\mathcal Tu\|_{L^2((t_1,t_2)\times\mathbb R^m\times\mathbb R^n)} \leq \|\mathcal Tu\|_{L^2((t_1,t_2)\times\mathbb R^m\times\mathbb R^n)} = \|u\|_{L^2((t_1,t_2)\times\mathbb R^{m} \times \mathbb R^n)}.
$$
Hence the integral on the right-hand side is finite and independent of $\alpha$. Letting $\alpha\to\infty$, we obtain:
\begin{equation}\label{990}
\mathcal{S}_R u \equiv 0, \quad a.e.\ (t, v, w) \in (0, t_1) \times \mathbb{R}^m \times \mathbb{R}^n.
\end{equation}
By Lemma \ref{condition-B1B2-proof}, $ \bigcup_{R>0}\mathcal U_R=\mathbb R^n$. Since the sets $\mathcal U_R$ are increasing with respect to $R$, $\mathbf 1_{\mathcal U_R}\to1$ pointwise. As $\mathcal T$ is unitary and $\mathcal S_R=\mathcal T^{-1}\mathbf 1_{\mathcal U_R} \mathcal T$, the dominated convergence theorem gives $\mathcal S_Ru\to u$ in $L^2((0,t_1)\times\mathbb R^m\times\mathbb R^n)$ as $R\to\infty$. Letting $R\to\infty$ in \eqref{990}, in the $L^2$-sense:
$$
u = 0, \quad a.e.\ (t, v, w) \in (0, t_1) \times \mathbb{R}^m \times \mathbb{R}^n.
$$
\end{proof}

\bigskip
\noindent
We now prove Theorem \ref{main} by iterating Proposition \ref{local time interval}.
\begin{proof}[{\bf Proof of Theorem \ref{main}.}]
Let $T' \in (0,T)$ be arbitrary, and set 
$$
\delta = \frac14 \min\{ c_0 \lambda^{-2}, T', T - T' \}.
$$
Taking $t_1 = \delta$ and $t_2 = 2\delta$, Proposition \ref{local time interval} gives 
$$
u = 0, \quad a.e.\ (t, v, w) \in (0, \delta) \times \mathbb{R}^m \times \mathbb{R}^n.
$$

We now show that $u = 0$ on later time interval. Suppose that $u = 0$ a.e. on $(0, s) \times\mathbb R^{m} \times \mathbb{R}^n$ for some $s<T'$. Choose $s_0 \in (s - \frac{\delta}{2}, s)$ such that $u(s_0) = 0$ in $L^2$ sense. Such an $s_0$ exists because $u=0$ for a.e. $t\in(s-\delta/2,s)$. Define
$$
U(\tau,v,w)=u(s_0+\tau,v,w),\quad 0<\tau<T-s_0.
$$
The shifted coefficients
$$
A_{s_0}(\tau,v)=A(s_0+\tau,v),\qquad
c_{s_0}(\tau,v)=c(s_0+\tau,v),\qquad
d_{s_0}(\tau,v)=d(s_0+\tau,v), 
$$
satisfy the same assumptions as $A,c,d$, and $U(0,\cdot,\cdot)=0$. Since
$$
2\delta\leq c_0\lambda^{-2},\quad s_0+2\delta<T,
$$
Proposition \ref{local time interval} applied to $U$ gives
$$
U = 0, \quad a.e.\ (\tau, v, w) \in (0,\delta)\times\mathbb R^m\times\mathbb R^n.
$$
Thus 
$$
u = 0, \quad a.e. \ (t, v, w) \in (s_0,s_0+\delta)\times\mathbb R^m\times\mathbb R^n.
$$
Since $s_0 > s - \frac{\delta}{2}$, we have 
$$
u = 0, \quad a.e.\ (t, v, w) \in (0,s+\frac{\delta}{2})\times\mathbb R^m\times\mathbb R^n.
$$

Repeating the above argument finitely many times, we obtain:
$$
u=0, \quad a.e.\ (t, v, w) \in (0,T')\times\mathbb R^m\times\mathbb R^n.
$$
Since $T'\in(0,T)$ is arbitrary, we conclude that
$$
u=0, \quad a.e.\ (t, v, w) \in (0,T)\times\mathbb R^m\times\mathbb R^n.
$$
\end{proof}

\section{Proofs of the Technical Lemmas}\label{appendix}
In this section, we prove Lemmas \ref{lemma01} and \ref{lemma02}. 
\begin{proof}[{\bf Proof of Lemma \ref{lemma01}}]
For simplicity, we fix $t \in (0, t_2)$ and focus on the spatial variables. Define 
$$
E(t) = \int_{\mathbb{R}^m} A \nabla_v \left( (t + b)^{-\alpha} h \right) \overline{\nabla_v \left( (t + b)^{-\alpha} h \right)} dv.
$$
Define $Q(t)$ to be the part of the left-hand side of Lemma \ref{lemma01} which does not contain the $\partial_t A$ term:\begin{align*}
Q(t) &= (t + b) \int_{\mathbb{R}^m} |\Div_v \left( A \nabla_v \left( (t + b)^{-\alpha} h \right) \right)|^2 dv + \alpha^2 (t + b)^{-1} \int_{\mathbb{R}^m} |(t + b)^{-\alpha} h|^2 dv - (2\alpha - 1)E(t).
\end{align*}
Thus we have 
\begin{equation}\label{lemma01-1}
Q(t) = \left\| (t + b)^{1/2} \Div_v \left( A \nabla_v \left( (t + b)^{-\alpha} h \right) \right) + \frac{\alpha}{(t + b)^{1/2}} (t + b)^{-\alpha} h \right\|^2_{L_v^2} + E(t), 
\end{equation}
where we use the fact that 
\begin{equation}\label{lemma01-tmp}
E(t) = \int_{\mathbb{R}^m} A \nabla_v \left( (t + b)^{-\alpha} h \right) \overline{\nabla_v \left( (t + b)^{-\alpha} h \right)} dv = -Re \int_{\mathbb{R}^m} \Div_v \left( A \nabla_v \left( (t + b)^{-\alpha} h \right) \right) \overline{(t + b)^{-\alpha} h} dv.
\end{equation}
From \eqref{lemma01-1}, we have 
\begin{equation}\label{lemma01-2}
Q(t) \ge E(t).
\end{equation}
On the other hand, from \eqref{lemma01-tmp},  
$$
E(t) \le \frac{(t + b)^{1/2} \left\| \Div_v \left( A \nabla_v \left( (t + b)^{-\alpha} h \right) \right) \right\|_{L_v^2} \cdot \frac{\alpha}{(t + b)^{1/2}} \left\| (t + b)^{-\alpha} h \right\|_{L_v^2}}{\alpha}, 
$$
then combining it with \eqref{lemma01-1}, we have
\begin{align*}
Q(t) &\ge \left( (t + b)^{1/2} \left\| \Div_v \left( A \nabla_v \left( (t + b)^{-\alpha} h \right) \right) \right\|_{L_v^2} \right)^2 + \left( \frac{\alpha}{(t + b)^{1/2}} \left\| (t + b)^{-\alpha} h \right\|_{L_v^2} \right)^2 \\
&\quad - (2 - \frac{1}{\alpha}) \left( (t + b)^{1/2} \left\| \Div_v \left( A \nabla_v \left( (t + b)^{-\alpha} h \right) \right) \right\|_{L_v^2} \right) \cdot \left( \frac{\alpha}{(t + b)^{1/2}} \left\| (t + b)^{-\alpha} h \right\|_{L_v^2} \right).
\end{align*}
By using the condition $\alpha \ge \alpha_0 > 1$, treating the right-hand side of above inequality as a quadratic function and completing the square, we have 
\begin{equation}\label{lemma01-5}
Q(t) \ge \frac34 \alpha (t + b)^{-1} \int_{\mathbb{R}^m} |(t + b)^{-\alpha} h|^2 dv.
\end{equation}
Combining \eqref{lemma01-2} and \eqref{lemma01-5} and integrating in time, we obtain: 
\begin{equation}\label{lemma01-6}
\begin{aligned}
&\quad \int_0^{t_2} \int_{\mathbb{R}^m} (t + b) |\Div_v \left( A \nabla_v \left( (t + b)^{-\alpha} h \right) \right)|^2 dvdt + \alpha^2 \int_0^{t_2} \int_{\mathbb{R}^m} (t + b)^{-1} |(t + b)^{-\alpha} h|^2 dvdt \\
&\quad - (2\alpha - 1) \int_0^{t_2} \int_{\mathbb{R}^m} A \nabla \left( (t + b)^{-\alpha} h \right) \overline{\nabla \left( (t + b)^{-\alpha} h \right)} dvdt \\
&\ge \frac38 \int_0^{t_2} \int_{\mathbb{R}^m} A \nabla \left( (t + b)^{-\alpha} h \right) \overline{\nabla \left( (t + b)^{-\alpha} h \right)} dvdt + \frac38 \alpha \int_0^{t_2} \int_{\mathbb{R}^m} (t + b)^{-1} |(t + b)^{-\alpha} h|^2 dvdt.
\end{aligned}
\end{equation}
It remains to control $(\partial_t A)$ term. By \eqref{condition-A} and \eqref{condition-3}, there exists a constant $C_m > 0$ depending only on $m$ such that 
\begin{align*}
|(\partial_t A) \nabla_v \left( (t + b)^{-\alpha} h \right) \overline{\nabla_v \left( (t + b)^{-\alpha} h \right)}| &\le C_m \lambda |\nabla_v \left( (t + b)^{-\alpha} h \right)|^2 \\
&\le C_m \lambda^2 A \nabla_v \left( (t + b)^{-\alpha} h \right) \overline{\nabla_v \left( (t + b)^{-\alpha} h \right)}.
\end{align*}
Meanwhile, since $0 < b \le t_2 \le c_0 \lambda^{-2}$, combining above inequality, $(\partial_t A)$ term has the following estimate: 
\begin{equation}\label{lemma01-8}
\begin{aligned}
&\quad \left|\int_0^{t_2} \int_{\mathbb{R}^m} (t + b) (\partial_t A) \nabla_v \left( (t + b)^{-\alpha} h \right) \overline{\nabla_v \left( (t + b)^{-\alpha} h \right)} dvdt \right| \\
&\le C_m c_0 \int_0^{t_2} \int_{\mathbb{R}^m} A \nabla_v \left( (t + b)^{-\alpha} h \right) \overline{\nabla_v \left( (t + b)^{-\alpha} h \right)} dvdt.
\end{aligned}
\end{equation}
Combining \eqref{lemma01-6} and \eqref{lemma01-8}, and then using \eqref{condition-A}, gives the desired inequality.
\end{proof}

\bigskip 
\noindent
We now prove Lemma \ref{lemma02}. 
\begin{proof}[{\bf Proof of Lemma \ref{lemma02}}]
For simplicity, denote 
$$
J_2 :=  2Re \int_0^{t_2} \int_{\mathbb{R}^m} (t + b) i (B_1^T e^{-tB_2^T} \rho) \cdot v \left( (t + b)^{-\alpha} h \right) \overline{(\frac{\alpha}{t + b} + \Div_v \left( A \nabla_v \right)) \left( (t + b)^{-\alpha} h \right)} dvdt .
$$
Direct computation gives: 
$$
J_2 = 2Re \int_0^{t_2} \int_{\mathbb{R}^m} (t + b) i (B_1^T e^{-t B_2^T} \rho) \cdot v (t + b)^{-\alpha} h \overline{\Div_v \left( A \nabla_v \left( (t + b)^{-\alpha} h \right) \right)} dvdt, 
$$
where we use the fact that
$$
2\alpha Re \int_0^{t_2} \int_{\mathbb{R}^m} i (B_1^T e^{-t B_2^T} \rho)\cdot v |(t + b)^{-\alpha} h|^2 dvdt = 0.
$$
Integrating by parts in the $v$-variables, we obtain 
\begin{align*}
J_2 &= -2Re \int_0^{t_2} \int_{\mathbb{R}^m} (t + b) A \nabla_v \left[ i (B_1^T e^{-tB_2^T} \rho) \cdot v (t + b)^{-\alpha} h \right] \overline{\nabla_v \left( (t + b)^{-\alpha} h \right)} dvdt \\
&= -2Re \int_0^{t_2} \int_{\mathbb{R}^m} (t + b) i A (B_1^T e^{-tB_2^T} \rho) \cdot (t + b)^{-\alpha} h \overline{\nabla_v \left( (t + b)^{-\alpha} h \right)} dvdt, 
\end{align*}
where we used 
$$
-2Re\int_0^{t_2} \int_{\mathbb{R}^m} (t + b) i (B_1^T e^{-tB_2^T} \rho)\cdot v A \nabla_v \left( (t + b)^{-\alpha} h \right) \overline{\nabla_v \left( (t + b)^{-\alpha} h \right)} dvdt = 0.
$$
Using $|A| \le C_m \lambda$, the Cauchy--Schwarz inequality and Young's inequality, we obtain 
\begin{equation}\label{lemma02-8}
\begin{aligned}
|J_2| &\le C_m \int_0^{t_2} \int_{\mathbb{R}^m} (t + b) |B_1^T e^{-tB_2^T} \rho| \cdot |(t + b)^{-\alpha} h| \cdot |\nabla_v \left( (t + b)^{-\alpha} h \right)| dvdt \\
&\le \varepsilon \int_0^{t_2} \int_{\mathbb{R}^m} |\nabla_v \left( (t + b)^{-\alpha} h \right)|^2 dvdt + C_\varepsilon \int_0^{t_2} \int_{\mathbb{R}^m} (t + b)^2 |B_1^T e^{-tB_2^T} \rho|^2 |(t + b)^{-\alpha} h|^2 dvdt.
\end{aligned}
\end{equation}
By assumption \eqref{condi-1}, we have 
\begin{equation}\label{lemma02-9}
(t + b)^2 |B_1^T e^{-tB_2^T} \rho|^2 = (t + b)^{-1} (t + b)^3 |B_1^T e^{-tB_2^T} \rho|^2 \le \varepsilon_0 \alpha (t + b)^{-1}.
\end{equation}
Combining \eqref{lemma02-8} and \eqref{lemma02-9} proves the lemma.
\end{proof}

\bigskip\noindent {\bf Acknowledgements.}
This work was supported by the NSFC (No.12031006) and the Fundamental Research Funds for the Central Universities of China.


\bigskip
\begin{thebibliography}{99}

\bibitem{Bar-1} E. Barucci, S. Polidoro, V. Vespri, Some results on partial differential equations and Asian options, {\em Math. Models Methods Appl. Sci.} 11 (3) (2001) 475-497.

\bibitem{BS} F. Black, M. Scholes, The pricing of options and corporate liabilities, {\em J. Polit. Econ.} 81 (3) (1973) 637-654.


\bibitem{Car} T. Carleman, Sur un probl\`eme d'unicit\'e pour les syst\`emes d'\'equations aux d\'eriv\'ees partielles \`a deux variables ind\'ependantes, {\em Ark. Mat. Astr. Fys.}, 26 (17) (1939) 9.

\bibitem{o-3} H. Chen, W.-X. Li, C.-J. Xu, Gevrey hypoellipticity for linear and non-linear Fokker-Planck equations. {\em J. Differential Equations}, 246 (1) (2009) 320-339.

\bibitem{Des} L. Desvillettes, Plasma kinetic models: the Fokker-Planck-Landau equation, in: P. Degond, L. Pareschi, G. Russo (Eds.), {\em Modeling and Computational Methods for Kinetic Equations}, Model. Simul. Sci. Eng. Technol., Birkh\"auser Boston, Boston, MA, 2004, pp. 171-193.

\bibitem{ESS} L. Escauriaza, G. Seregin, V. \v{S}ver\'ak, Backward uniqueness for parabolic equations, {\em Arch. Rational Mech. Anal.}, 169 (2) (2003) 147-157.

\bibitem{ESS2} L. Escauriaza, G. Seregin, V. \v{S}ver\'ak, $L^{3, \infty}$ solutions to the Navier-Stokes
equations and backward uniqueness, {\em Russ. Math. Surv.}, 58 (2) (2003) 211-250.

\bibitem{Ghazaei} M. M. Ghazaei Ardakani, A. Robertsson, R. Johansson, Online minimum-jerk trajectory generation, in: {\em Proceedings of the IMA Conference on Mathematics of Robotics}, Oxford, United Kingdom, 2015.

\bibitem{GIMV} F. Golse, C. Imbert, C. Mouhot, A. F. Vasseur, Harnack inequality for kinetic Fokker-Planck equations with rough coefficients and application to the Landau equation, {\em Ann. Sc. Norm. Super. Pisa, Cl. Sci.}, (5) 19 (1) (2019) 253–295.

\bibitem{Imbert} C. Imbert, De Giorgi’s regularity theory for elliptic, parabolic and kinetic equations, arXiv preprint arXiv:2601.15238, 2026.

\bibitem{Kol-1} A. N. Kolmogorov, Zuf\"allige Bewegungen (zur Theorie der Brownschen Bewegung), {\em Ann. of Math.}, 35 (1) (1934) 116-117.

\bibitem{control-MM} K. Mehrotra, P. R. Mahapatra, A jerk model for tracking highly maneuvering targets, {\em IEEE Trans. Aerosp. Electron. Syst.}, 33 (4) (1997) 1094-1105.

\bibitem{MX-1} Y. Morimoto, C.-J. Xu, Ultra-analytic effect of Cauchy problem for a class of kinetic equations, {\em J. Differential Equations}, 247 (2) (2009) 596-617.

\bibitem{Mouhot01} C. Mouhot, De Giorgi--Nash--Moser and H\"ormander theories: new interplays, in: {\em Proceedings of the International Congress of Mathematicians--Rio de Janeiro 2018. Vol. III. Invited Lectures}, World Sci. Publ., Hackensack, NJ, 2018, pp. 2467-2493.

\bibitem{TAN-Z} Tu A. Nguyen, On a question of Landis and Oleinik, {\em Trans. Amer. Math. Soc.}, 362 (6) (2010) 2875-2899.

\bibitem{Palleschi} A. Palleschi, M. Garabini, D. Caporale, L. Pallottino, Time-optimal path tracking for jerk controlled robots, {\em IEEE Robot. Autom. Lett.}, 4 (4) (2019) 3932-3939.

\bibitem{fi-1} A. Pascucci, Kolmogorov equations in physics and in finance, {\em Prog. Nonlinear Differ. Equ. Their Appl.}, 63 (2005) 353-364.

\bibitem{Sonin-1} I. M. Sonin, On a class of degenerate diffusion processes, {\em Theory Probab. Appl.}, 12 (1967) 490-496.


\bibitem{Villani-1} C. Villani, On the spatially homogeneous Landau equation for Maxwellian molecules, {\em Math. Models Methods Appl. Sci.} 8 (6) (1998) 957-983.

\bibitem{C.W} C. Wang, Heat flow of harmonic maps whose gradients belong to $L^n_x L_t^\infty$, {\em Arch. Ration. Mech. Anal.}, 188 (2) (2008) 351-369.

\bibitem{Wang-Zhang01} W. Wang, L. Zhang, The $C^{\alpha}$ regularity of a class of non-homogeneous ultraparabolic equations, {\em Sci. China Ser. A.}, 52 (8) (2009) 1589-1606.

\bibitem{Wang-Zhang000} W. Wang, L. Zhang, Backward uniqueness of Kolmogorov operators, {\em Methods Appl. Anal.}, 20 (1) (2013) 79-88. 

\bibitem{WW} J. Wu, W. Wang, On backward uniqueness for the heat operator in cones, {\em J. Differential Equations}, 258 (1) (2015) 224-241.

\bibitem{W-Z01} J. Wu, L. Zhang, Backward uniqueness for parabolic operators with variable coefficients in a half space, {\em Commun. Contemp. Math.}, 18 (1) (2016) 1550011.

\bibitem{W-Z02} J. Wu, L. Zhang, Backward uniqueness for general parabolic operators in the whole space, {\em Calc. Var. Partial Differential Equations}, 58 (4) (2019) 155.

\bibitem{W-Z03} J. Wu, L. Zhang, The Landis--Oleinik conjecture in the exterior domain, {\em Adv. Math.}, 302 (2016) 190-230.

\bibitem{Xin-Zhang} Z. Xin, L. Zhang, On the global existence of solutions to the Prandtl’s system, {\em Adv. Math.}, 181 (1) (2004) 88-133.

\bibitem{Yamamoto} M. Yamamoto, Carleman estimates for parabolic equations and applications, {\em Inverse Probl.}, 25 (12) (2009) 123013.

\bibitem{Zhang000} L. Zhang, On the regularity of weak solutions for ultra-parabolic equations in the divergence form (in Chinese), {\em Sci. Sin. Math.}, 54 (3) (2024) 547-558.





\end{thebibliography}
\end{document}